\theoremstyle{definition}
\newtheorem{theorem}{Theorem}[section]
\newtheorem{lemma}[theorem]{Lemma}
\newtheorem{corollary}[theorem]{Corollary}
\newtheorem{proposition}[theorem]{Proposition}
\def\D{\mathcal{D}}
\def\Z{\mathcal{Z}}
\def\Q{\mathcal{Q}}
\def\XX{\mathcal{X}}
\def\YY{\mathcal{Y}}
\def\X{\mathbf{X}}
\def\B{\mathbf{B}}
\def\U{\mathbf{U}}
\def\J{\mathbf{J}}
\def\ve{\varepsilon}
\newcommand{\abs}[1]{\left\vert#1\right\vert}
\author{Margaret-Ellen Messinger\thanks{Supported by the Natural Sciences and Engineering Research Council of Canada (Discovery Grant 2018-04059).}
  \and Logan Pipes}
\title{On Pancyclicity in a Mixed Model for Domination Reconfiguration}
\affiliation{
  Mount Allison University, Sackville, Canada}
\keywords{graph theory, domination, reconfiguration, pancyclicity}
\begin{document}
\publicationdata{vol. 28:2}{2026}{21}{10.46298/dmtcs.15637}{2025-05-07; 2025-05-07; 2026-03-02}{2026-03-04}
\maketitle
\begin{abstract}
  A new model for domination reconfiguration is introduced which combines the properties of the preexisting token addition/removal (TAR) and token sliding (TS) models.  The vertices of the TARS-graph correspond to the dominating sets of $G$, where two vertices are adjacent if and only if they are adjacent via either the TAR reconfiguration rule or the TS reconfiguration rule.  While the domination reconfiguration graph obtained by using only the TAR rule (sometimes called the dominating graph) will never have a Hamilton cycle, we show that for some classes of graphs $G$, by adding a relatively small number of token sliding edges, the resulting graph is not only hamiltonian, but is in fact pancyclic.  In particular, if the underlying graphs are trees, complete graphs, or complete multipartite graphs, we show that their TARS-graphs will be pancyclic. Notably, we prove that if the TARS-graphs of $G$ and $H$ are pancyclic, then the TARS-graph of the join $G \vee H$ will also be pancyclic. We conclude by posing the question: Are all TARS-graphs pancyclic?
\end{abstract}


\section{Introduction}

The study of reconfiguration concerns the solutions to a problem and the relationships between those solutions. A reconfiguration graph can be constructed by representing each solution by a vertex, with two vertices being adjacent if their corresponding solutions are deemed similar according to some rule. In most contexts, the fundamental questions in reconfiguration pertain to connectivity: Is it possible to transition from one solution to another given solution? Is it possible to start at any one solution, transition through solutions, and arrive at any other solution? Another central question about the structure of reconfiguration graphs concerns hamiltonicity; that is, under what conditions does a reconfiguration graph have a Hamilton path or Hamilton cycle? For a more detailed introduction to graph reconfiguration, see the surveys by~\cite{MN20} and~\cite{Nishimura18}.

A \emph{dominating set} of a graph $G$ is a set $D \subseteq V(G)$ such that every vertex of $V(G)\backslash D$ is adjacent to at least one vertex of $D$.
In the \emph{dominating graph} of $G$, $\D(G)$, each vertex represents a dominating set of $G$. Adjacency in $\D(G)$ is defined as follows: Distinct vertices $x$ and $y$ of $\D(G)$ (with corresponding dominating sets $X$ and $Y$, respectively) are adjacent if and only if $Y$ can be obtained from $X$ by adding or removing a single vertex. The graph $\D(G)$ is the reconfiguration graph of dominating sets of $G$ under the \emph{token addition/removal} (TAR) model, first considered by~\cite{HS14}. \cite{ABCHMSS21} observed that since dominating graphs have an odd number of vertices but no odd cycles, no dominating graph has a Hamilton cycle. However, \cite{ABCHMSS21,ABCHMSS22} proved that some classes of seed graphs yield dominating graphs with Hamilton paths. In this paper, we show for some classes of graphs $G$ that by adding only a few additional edges to the dominating graph $\D(G)$, we find not only a Hamilton cycle, but cycles of every cardinality up to $\abs{V(\D(G))}$. That is, by adding a relatively small number of edges to $\D(G)$, the graph becomes \emph{pancyclic}.
We call the new graph the TARS-graph of $G$, since the adjacencies are given by both the token addition/removal (TAR) model and the token sliding (TS) model, which we define in Section~\ref{sec:defn}.

The primary objective of this work is to establish the pancyclicity of many of these TARS-graphs, and not to explicitly count the number of TS edges used.
We occasionally note the relative proportions of edge types in the constructed cycles,
though we make no claim that these are optimal.
For instance, in the TARS-graph depicted in Figure~\ref{fig:TARS_example},
the graph would be pancyclic even with only one TS-edge (the one depicted horizontally).
However, the proof of Theorem~\ref{thm:joins} uses more than one TS edge to construct all the cycles.

To our knowledge, this (and the undergraduate thesis by the second author,~\cite{Pipes24}, from which this work is inspired) is the first time the union of two disjoint adjacency rules has been considered for reconfiguration graphs. However, we also draw the reader's attention to the recent work by~\cite{Beaton} which generalizes the token sliding adjacency rule in another way.

In Section~\ref{sec:defn}, we formally define the TARS-graph of $G$ and make some preliminary observations about the properties of TARS-graphs. In Section~\ref{sec:trees}, we show that the TARS-graph of any tree is pancyclic. To do this, we prove two more general results which show that if a graph $G$ has a pancyclic TARS-graph, then adding leaves in a prescribed way to $G$ will yield a new graph with a pancyclic TARS-graph. In Section~\ref{sec:join}, we show that if the TARS-graphs of $G$ and $H$ are pancyclic, then the TARS-graph of their join $G \vee H$ must also be pancyclic. Notably, this implies graphs such as complete graphs, threshold graphs, complete multipartite graphs, and complete split graphs have pancyclic TARS-graphs. We conclude with some open questions in Section~\ref{sec:questions}.


\subsection{Definitions and preliminary results}\label{sec:defn}

The \emph{token sliding} (TS) model for domination reconfiguration graphs uses a different rule for adjacency than the TAR model: Vertices $x,y$ in the reconfiguration graph of $G$ are adjacent if and only if an element of dominating set $X$ can be exchanged for an adjacent element to obtain the dominating set $Y$. In this model, an element from $X$ is said to ``slide'' along an edge of $G$ to yield $Y$. The TS model was introduced by~\cite{FHHH11}, who defined $G(\gamma)$ to be the reconfiguration graph in which each vertex corresponds to a minimum dominating set of $G$ and where the adjacencies are as defined here.

If we generalize this idea to include a vertex in the token sliding domination reconfiguration graph for each of the dominating sets of $G$---rather than just those of minimum cardinality---then the domination reconfiguration graph will be disconnected. This is because vertices corresponding to dominating sets of different cardinalities will never be adjacent.
In the TAR model, however, the dominating graph $\D(G)$ is never disconnected, as any superset of a dominating set is still a dominating set, and thus every vertex in $\D(G)$ lies on a path to the vertex representing $V(G)$, obtained by sequentially including the vertices of $G$ absent from the initial dominating set.

While these two models are very contrasting in this respect, we now introduce a novel mixed model which unifies them, called the TARS-graph.
For a graph $G$, called the \emph{seed} graph, each vertex of the \emph{TARS-graph} of $G$, $\ve(G)$, represents a dominating set of $G$. Let $x,y$ be vertices in $\ve(G)$ which represent dominating sets $X$ and $Y$ of $G$. Then $x$ and $y$ are adjacent in $\ve(G)$, written $x \sim y$, if and only if \begin{enumerate}
    \item $Y$ can be obtained from $X$ by adding or deleting a single vertex of $G$; or
    \item there exist vertices $u \in X$, $v \in Y \setminus X$ that are adjacent in $G$ such that \begin{math}Y = X \cup \{v\} \backslash \{u\}\end{math}.
\end{enumerate}

If the former condition holds, we say that $x$ and $y$ are adjacent via token addition/removal,
and in the latter case, we say that $x$ and $y$ are adjacent via token sliding.

By allowing both rulesets for adjacency, the TARS-graph $\ve(G)$ of a graph $G$ represents the union of the edge sets of the dominating graph $\D(G)$ and $G(\gamma)$, where the latter is extended to dominating sets of all cardinalities. By combining these two separate models, the reconfiguration graph gains the guaranteed connectivity of the TAR model,
while also allowing for the ability to transition between dominating sets of the same cardinality according to the TS model.
For an illustration of this joint model, see Figure~\ref{fig:TARS_example}, which depicts the TARS-graph of $K_{1,3}$, where the solid edges correspond to edges given by the TAR model and the dotted edges correspond to edges given by the TS model.
In particular, this choice of seed graph highlights that we include TS edges between dominating sets of all cardinalities; not just those of minimum cardinality, since there is a unique dominating set of $K_{1,3}$ of minimum cardinality.
Note that we will often abuse terminology and refer to dominating sets of $G$ interchangeably with vertices of $\ve(G)$ when it is clear from context whether we are referring to $G$ or to $\ve(G)$.

\begin{figure}[thbp]
	\centering
	\begin{tikzpicture}[scale=0.8]
		\newcommand\starThree[5]{%
			\draw[shift={#1}, thick, fill=white] (0,0) circle (1);%
			\draw[shift={#1}, thick] (0,0) -- ( 90:0.8);%
			\draw[shift={#1}, thick] (0,0) -- (210:0.8);%
			\draw[shift={#1}, thick] (0,0) -- (330:0.8);%
			\draw[shift={#1}, color=black, fill=#2, line width=1pt] (  0,  0) circle (4pt); 
			\draw[shift={#1}, color=black, fill=#3, line width=1pt] ( 90:0.7) circle (4pt); 
			\draw[shift={#1}, color=black, fill=#4, line width=1pt] (330:0.7) circle (4pt); 
			\draw[shift={#1}, color=black, fill=#5, line width=1pt] (210:0.7) circle (4pt); 
		}

		\draw[very thick] (0,3) -- (3.5,6);
		\draw[very thick] (0,3) -- (3.5,3);
		\draw[very thick] (0,3) -- (3.5,0);

		\draw[very thick] (3.5,6) -- (7,6);
		\draw[very thick] (3.5,6) -- (7,0);

		\draw[very thick] (3.5,3) -- (7,6);
		\draw[very thick] (3.5,3) -- (7,3);

		\draw[very thick] (3.5,0) -- (7,3);
		\draw[very thick] (3.5,0) -- (7,0);

		\draw[very thick] (7,6) -- (10.5,3);
		\draw[very thick] (7,3) -- (10.5,3);
		\draw[very thick] (7,0) -- (10.5,3);

		\draw[very thick] (10.5,6) -- (10.5,3);

		\draw[very thick, dashed] (10.5,6) -- (7,6);
		\draw[very thick, dashed] (10.5,6) -- (7,3);
		\draw[very thick, dashed] (10.5,6) -- (7,0);

		\starThree{(0,   3)}{red  }{black}{black}{black}
		\starThree{(3.5, 6)}{red  }{red  }{black}{black}
		\starThree{(3.5, 3)}{red  }{black}{red  }{black}
		\starThree{(3.5, 0)}{red  }{black}{black}{red  }
		\starThree{(7,   6)}{red  }{red  }{red  }{black}
		\starThree{(7,   3)}{red  }{black}{red  }{red  }
		\starThree{(7,   0)}{red  }{red  }{black}{red  }
		\starThree{(10.5,6)}{black}{red  }{red  }{red  }
		\starThree{(10.5,3)}{red  }{red  }{red  }{red  }
	\end{tikzpicture}
	\caption{The TARS-graph of $K_{1,3}$ where the vertices in each dominating set are coloured \textcolor{red}{red}.}
	\label{fig:TARS_example}
\end{figure}

We provide several relevant definitions in the coming paragraphs, but we refer the reader to~\cite{BM08} for any undefined graph-theoretic terms.

A graph $G$ on $n$ vertices is \emph{pancyclic} if, for every integer $\ell$ satisfying $3 \leq \ell \leq n$, $G$ contains a cycle of length $\ell$. That is, if a graph has a cycle of every possible length up to and including a \emph{Hamilton cycle}, which is a cycle containing every vertex of the graph exactly once. A Hamilton cycle in a reconfiguration graph can be thought of as a \emph{combinatorial Gray code}: a listing of all the objects in a set so that successive objects differ in some prescribed minimal way.

The binary-reflected Gray code is a particularly useful type of combinatorial Gray code which will be a key tool in the proof of one of our main results (Theorem~\ref{thm:joins}).
More specifically, the \emph{binary-reflected Gray code on $n$ bits}
is a cycle through all of the bitstrings of length $n$
such that any two successive bitstrings differ by only a single bit.
The binary-reflected Gray code on $n$ bits can be generated recursively from the binary-reflected Gray code on $n-1$ bits as follows.
The first half of the code on $n$-bits is obtained by prefixing the entries of the $n-1$ sequence with a $0$.
The second half of the code on $n$-bits is obtained by reversing the sequence on $n-1$ bits and prefixing these entries with a $1$.
An illustration of the binary-reflected Gray code on $5$ bits is given in Figure~\ref{fig:gray_code}.
We refer the reader to \cite{Mutze23} for more information about the binary-reflected Gray code.

\begin{figure}[thbp]
	\centering
	\begin{tabular}{|rrrrrrr|}
		\hline
		00000 &~& 01100 &~& 11000 &~& 10100 \\
		00001 &~& 01101 &~& 11001 &~& 10101 \\
		00011 &~& 01111 &~& 11011 &~& 10111 \\
		00010 &~& 01110 &~& 11010 &~& 10110 \\
		00110 &~& 01010 &~& 11110 &~& 10010 \\
		00111 &~& 01011 &~& 11111 &~& 10011 \\
		00101 &~& 01001 &~& 11101 &~& 10001 \\
		00100 &~& 01000 &~& 11100 &~& 10000 \\ \hline
	\end{tabular}
	\caption{The binary-reflected Gray code on $5$ bits, read column-wise.}
	\label{fig:gray_code}
\end{figure}

This construction is extremely useful in the context of reconfiguration graphs.
In particular, it generalizes very naturally from a cycle through all of the bitstrings of some length to a cycle through all of the subsets of some set (a set of vertices, in our case),
where successive subsets differ by only a single missing/extra element.
That is, if we associate to each bit $j$ a vertex $v_j$ of $V(G)$, then the subset of $V(G)$ corresponding to a given bitstring contains $v_j$ if and only if bit $j$ is a $1$ in the bitstring. This binary-reflected Gray code then provides a way to iterate through many of the subsets of the vertices of a graph using only TAR edges.

An important question that arises with any new model/class of graphs is the question of characterization/realizability.
That is, is there some sort of characterization, structural or otherwise, that specifies which graphs can or cannot be TARS-graphs?
There is certainly no form of ``forbidden subgraph'' or ``forbidden induced subgraph'' characterization of TARS-graphs;
for any graph $G$, there is a TARS-graph which will not only contain $G$ as a subgraph, but in fact as an induced subgraph.
\cite{CHHH20} outlined a construction which shows that every graph $H$ is $G(\gamma)$ for some seed graph $G$, and that $G$ can be obtained from $H$ by adding only a few extra vertices and some edges.
Further, since minimum cardinality dominating sets will never be adjacent via token addition/removal (since they have the same cardinality), for the same seed graph $G$, the induced subgraph of $\ve(G)$ corresponding to only the minimum dominating sets will be exactly $H \cong G(\gamma)$.
Thus this same construction illustrates that any graph can be found as a subgraph of some TARS-graph.

However, there are still some restrictions on which graphs are realizable as TARS-graphs.
\cite{BCS09} proved that every graph $G$ has an odd number of dominating sets,
and therefore every TARS-graph will have an odd number of vertices.
Additionally, every TARS-graph will be connected, since any TARS-graph $\ve(G)$ will contain the dominating graph $\D(G)$ as a maximal subgraph, which is guaranteed to be connected.
So, while every graph can be a subgraph of a TARS-graph, not every graph can be a TARS-graph itself.


\section{Trees}\label{sec:trees}

Our first major result is to show that the TARS-graphs of trees are pancyclic.
If $G$ is a graph, and $v$ is a vertex in $G$, then we write $G - \{v\}$ to be the graph induced by the deletion of the vertex $v$.
Taking inspiration from \cite{ABCHMSS22}, we define two operations:
\begin{itemize}
	\item Operation A:
		Let $H$ be a graph with vertices $u$, $v$, and $x$ such that \begin{math}N(u) = \{x,v\}\end{math}
		and \begin{math}N(v) = \{u\}\end{math}.
		Then \begin{math}H^\prime = H - \{u,v\}\end{math} is obtained from $H$ by Operation A.
	\item Operation B:
		Let $H$ be a graph with vertices $u$, $v$, and $x$ such that \begin{math}N(u) = N(v) = \{x\}\end{math}.
		Then \begin{math}H^\prime = H - \{v\}\end{math} is obtained from $H$ by Operation B.
\end{itemize}
These operations are demonstrated pictorially in Figure~\ref{fig:operations}.

\begin{figure}[bht]
	\centering
		\begin{tikzpicture}
			\draw[thick, fill=white] (0,0) circle (1.2);
			\draw[fill=black, thick] (0.7,0) coordinate (x) circle (4pt) node[above left] {$x$} -- ++(1,0) circle (4pt) node[above left] {$u$} -- ++(1,0) circle (4pt) node[above left] {$v$};
			\node at (165:0.7) {\large $H^\prime$};
			\node at (1.7,1) {\large $H$};
			\foreach \th in {210, 225, 240} 
			\draw[dotted, thick] (x) -- ++(\th:1);
		\end{tikzpicture}
        \qquad
        \qquad
        \qquad
		\begin{tikzpicture}
			\draw[thick, fill=white] (0,0) circle (1.2);
			\draw[fill=black, thick] (0.7,0) coordinate (x) circle (4pt) node[left=4pt,yshift=3pt] {$x$} -- ++(0:1) circle (4pt) node[above left] {$v$};
			\draw[fill=black, thick] (x) -- ++(127:1) circle (4pt) node[left=3pt] {$u$};
			\node at (165:0.7) {\large $H^\prime$};
			\node at (1.7,1) {\large $H$};
			\foreach \th in {210, 225, 240}
			\draw[dotted, thick] (x) -- ++(\th:1);
		\end{tikzpicture}
	\caption{Operation A (left) and Operation B (right).}
	\label{fig:operations}
\end{figure}

In the following two theorems, we consider a graph $H^\prime$ to be the graph obtained by applying Operation A or Operation B to a graph $H$. We show that if $\ve(H^\prime)$ is pancyclic, then $\ve(H)$ is pancyclic.
For a set of vertices $S$ and another vertex $w$, it will be useful to abbreviate $S \cup \{w\}$ by the shorter $S^w$. We additionally generalize this notation for extensions by two or more elements with commas, such as taking $S^{u,v}$ to mean $S \cup \{u,v\}$, etc.


\begin{theorem}
	\label{thm:opA}
	Let $H^\prime$ be a connected graph on at least $2$ vertices such that it is obtained from another graph $H$ by Operation A.
	If $\ve(H^\prime)$ is pancyclic, then $\ve(H)$ is also pancyclic.
\end{theorem}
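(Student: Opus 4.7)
The plan is to exploit a natural three-layer decomposition of $\D(H)$ induced by the pendant attached at $u$ and $v$, and to build cycles of every required length by lifting cycles from $\ve(H')$ and locally modifying them.

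First I would classify $\D(H)$. Since $v$ is a leaf with unique neighbour $u$, every dominating set $D$ of $H$ meets $\{u,v\}$. Writing $D'=D\cap V(H')$ and setting $W=\{S\subseteq V(H'):S\text{ dominates }V(H')\setminus\{x\}\text{ in }H'\}$ (which contains $\D(H')$), the sets of $\D(H)$ split into three disjoint layers:
$T_v=\{D'\cup\{v\}:D'\in\D(H')\}$, $T_u=\{D'\cup\{u\}:D'\in W\}$, and $T_{uv}=\{D'\cup\{u,v\}:D'\in W\}$.
A direct case analysis (using that $u$'s only neighbours in $H$ are $x,v$ and that $v$'s only neighbour is $u$) shows that within each layer, two vertices of $\ve(H)$ are adjacent exactly when the underlying $D'$s differ by a single TAR or TS move in $H'$; in particular $T_v$ induces a copy of $\ve(H')$, while $T_u$ and $T_{uv}$ induce enlarged copies containing extra vertices over $W\setminus\D(H')$. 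Between layers, edges occur only vertically at a common $D'$: for $D'\in\D(H')$ the three vertices $D'\cup\{u\}$, $D'\cup\{v\}$, $D'\cup\{u,v\}$ form a triangle (two TAR edges plus the TS edge sliding $u$ to $v$), while for $D'\in W\setminus\D(H')$ only the TAR edge $D'\cup\{u\}\sim D'\cup\{u,v\}$ appears. Critically, for such $D'$ the set $D^+:=D'\cup\{x\}$ lies in $\D(H')$, and the four vertices over $D'$ and $D^+$ in $T_u\cup T_{uv}$ form a $4$-cycle of TAR edges.

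Next I would construct cycles of each length by lifting. The three basic lifts of a cycle $C$ of length $k$ in $\ve(H')$ (which exists for every $3\le k\le|\D(H')|$ by hypothesis) are: a cycle of length $k$ in $T_v$; a ``prism'' of length $2k$ using lifted Hamilton paths of $C$ in $T_u$ and $T_{uv}$ joined by two vertical TAR edges; and a ``triangular prism'' of length $3k$ threading the triangle at each vertex of $C$. Shorter cycles of lengths $4,5,6$ come from a single edge of $\ve(H')$ (the $4$-cycle using $T_u,T_{uv}$ alone; lengths $5,6$ insert one or both triangles at the endpoints). Intermediate lengths between $k$ and $3k$ are reached by local deformations of a $T_v$-lift: replacing a single edge with a $3$-edge detour through $T_u$ or $T_{uv}$ adds $2$, while replacing a $2$-edge subpath by a $5$-edge detour that threads a triangle at its centre adds $3$, so by composing such modifications on disjoint segments one can realise every length from $3$ up to $3|\D(H')|$.

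Finally, to reach lengths above $3|\D(H')|$, I must incorporate the $2|W\setminus\D(H')|$ vertices supported over the extra weak dominating sets. The key insertion step uses the $4$-cycle above: any cycle containing the vertical edge $(D^+)^u\sim(D^+)^{u,v}$ can be lengthened by $2$ while picking up the pair $D'^u,D'^{u,v}$ via the detour $(D^+)^u\to D'^u\to D'^{u,v}\to(D^+)^{u,v}$. The main obstacle --- and the bulk of the technical work --- will be arranging the lifted cycle so that its vertical edges sit over the $D^+$'s needed for detours, and verifying that multiple insertions can be performed simultaneously to realise every length up to the Hamilton length $|V(\ve(H))|=2|W|+|\D(H')|$. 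I expect this to be manageable because the pancyclicity of $\ve(H')$ provides many candidate cycles $C$ and insertions at distinct vertical edges are locally independent, but the compatibility verification is the delicate part of the argument.
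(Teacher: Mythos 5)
Your decomposition of the dominating sets of $H$ into the three layers $T_v$, $T_u$, $T_{uv}$ over $\D(H^\prime)$ and $W$ is correct and matches the structure the paper exploits (the paper's $\J$ is your $W\setminus\D(H^\prime)$), and your local moves --- the $+2$ detours into $T_u$/$T_{uv}$, the triangle $D^{\prime u}\sim D^{\prime v}\sim D^{\prime u,v}$ over each $D^\prime\in\D(H^\prime)$, and the $4$-cycle over $D^\prime\in W\setminus\D(H^\prime)$ and $D^\prime\cup\{x\}$ used to absorb the extra vertices --- are exactly the moves the paper uses. But the proposal stops short of a proof at precisely the point you flag yourself: you never actually produce cycles of each length from $3\abs{\D(H^\prime)}+1$ up to $\abs{V(\ve(H))}=3\abs{\D(H^\prime)}+2\abs{W\setminus\D(H^\prime)}$. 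For each $S\in W\setminus\D(H^\prime)$ the insertion requires that the base cycle traverse the specific vertical edge between $S^{u,x}$ and $S^{u,v,x}$, and you need all of these edges present simultaneously so the insertions can be performed independently and one at a time. ``I expect this to be manageable'' is the theorem's hard step, not a remark. The paper resolves it by starting from a Hamilton cycle $(G_1,\ldots,G_n)$ of $\ve(H^\prime)$ and building the fully-detoured prism cycles $\Z_o^*$ and $\Z_e^*$, in which the triangle over \emph{every} $G_j$ is threaded so that the edge joining $G_j^u$ and $G_j^{u,v}$ appears for every $j$; since $S\mapsto S\cup\{x\}$ injects $W\setminus\D(H^\prime)$ into $\{G_2,\ldots,G_n\}$ (after choosing $G_1$ not of that form, which uses that $x$ is not isolated in $H^\prime$ --- this is where the hypothesis that $H^\prime$ is connected on at least two vertices enters, and your proposal never uses it), the insertions land on pairwise distinct edges. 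The paper also needs a separate repair for the parity edge case in which $G_n=S\cup\{x\}$ for some such $S$, since $\Z_e^*$ omits $G_n^{u,v}$; your outline does not anticipate this.

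A secondary, smaller gap: your interpolation of lengths between $n$ and $3n$ by composing $+2$ and $+3$ detours ``on disjoint segments'' is not verified. The $+3$ detour as you describe it consumes two consecutive base vertices and adds vertices over both, so the detours interact and the claim that every intermediate length is achieved needs an explicit schedule. The paper sidesteps this entirely by maintaining two base cycles of opposite parity (an even-length $\Z_e$ and an odd-length $\Z_o$, using that $n$ is odd) and only ever adding two vertices at a time, which makes the length bookkeeping mechanical. Neither gap is fatal to the strategy --- the approach is the right one --- but as written the proposal is an outline of the paper's proof with its two delicate verifications left undone.
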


\begin{proof}
	Since $\ve(H^\prime)$ is pancyclic, there exist cycles in $\ve(H^\prime)$
	of every length from $3$ to $n = \abs{V(\ve(H^\prime))}$.
	Since each dominating set of $H^\prime$ can be extended into a dominating set of $H$
	by adding $v$ to the set (or $u$, or both),
	we also have cycles in $\ve(H)$ of each length from $3$ to $n$.
	Thus it suffices to find cycles in $\ve(H)$ of each length from $n+1$ to $\abs{V(\ve(H))}$.
	In particular, let $(F_j)_{1 \leq j \leq n-1}$ and $(G_j)_{1 \leq j \leq n}$ be cycles of length $n-1$ and $n$ in $\ve(H^\prime)$, respectively,
	with corresponding cycles $(F_j^v)_{1 \leq j \leq n-1}$ and $(G_j^v)_{1 \leq j \leq n}$ in $\ve(H)$. We use these to construct cycles of lengths $n+1$ through $n+4$ as follows, with deviations listed in \textcolor{red}{red}:
	\begin{align*}
		&\text{length}~n+1: &&(F_1^v, \textcolor{red}{F_1^{u,v}}, \textcolor{red}{F_2^{u,v}}, F_2^v, F_3^v, F_4^v, \ldots), \\
		&\text{length}~n+2: &&(G_1^v, \textcolor{red}{G_1^{u,v}}, \textcolor{red}{G_2^{u,v}}, G_2^v, G_3^v, G_4^v, \ldots), \\
		&\text{length}~n+3: &&(G_1^v, \textcolor{red}{G_1^{u,v}}, \textcolor{red}{G_1^u}, \textcolor{red}{G_2^u}, G_2^v, G_3^v, G_4^v, \ldots), \\
		&\text{length}~n+4: &&(G_1^v, \textcolor{red}{G_1^{u,v}}, \textcolor{red}{G_1^u}, \textcolor{red}{G_2^u}, \textcolor{red}{G_2^{u,v}}, G_2^v, G_3^v, G_4^v, \ldots).
	\end{align*}

	Note the use of the token sliding edge in the cycle of length $n+3$ here: $(G_2^u, G_2^v)$.
	We now define two especially important cycles---a cycle of even length $n+5$ and another of odd length $n+6$ (since $n$ is inherently odd, see~\cite{BCS09}). Note again the token sliding edge in $\Z_e$, $(G_n^v, G_n^u)$:
	\begin{align*}
		\Z_e &= (\textcolor{red}{G_1^u}, \textcolor{red}{G_2^u}, \textcolor{red}{G_2^{u,v}}, \textcolor{red}{G_1^{u,v}}, G_1^v, G_2^v, G_3^v, G_4^v, \ldots, G_{n-1}^v, G_n^v, \textcolor{red}{G_n^u}), \\
		\Z_o &= (\textcolor{red}{G_1^u}, \textcolor{red}{G_2^u}, \textcolor{red}{G_2^{u,v}}, \textcolor{red}{G_1^{u,v}}, G_1^v, G_2^v, G_3^v, G_4^v, \ldots, G_{n-1}^v, G_n^v, \textcolor{red}{G_n^{u,v}}, \textcolor{red}{G_n^u}).
	\end{align*}
	The remaining cycles will all be modifications of these two sequences.

	For any positive integer $j \leq n-1$,
	observe that \begin{math}(G_j^v, G_j^{u,v}, G_{j+1}^{u,v}, G_{j+1}^v)\end{math} forms a path in $\ve(H)$,
	and so does \begin{math}(G_j^{u,v}, G_j^u, G_{j+1}^u, G_{j+1}^{u,v})\end{math}.
	From $\Z_e$, note that replacing the pair \begin{math}(G_j^v, G_{j+1}^v)\end{math}
	for some odd $j$ satisfying $3 \leq j \leq n-2$
	with the sequence \begin{math}(G_j^v, G_j^{u,v}, G_{j+1}^{u,v}, G_{j+1}^v)\end{math}
	increases the length of the cycle by two,
	and further replacing \begin{math}(G_j^{u,v}, G_{j+1}^{u,v})\end{math}
	by \begin{math}(G_j^{u,v}, G_j^u, G_{j+1}^u, G_{j+1}^{u,v})\end{math}
	increases the length by another two.

	Performing these replacements one after another
	for each allowable value of $j$ generates cycles of all even lengths
	from $n+5$ up to $3n-1$.
	Similar replacements can be done for $\Z_o$,
	generating all odd cycle lengths from $n+6$ up to $3n$.
	Let $\Z_e^*$ and $\Z_o^*$ be these largest even and odd cycles, of orders $3n-1$ and $3n$, respectively.

	Recall that $x$ is the vertex from which $H^\prime$ was extended into $H$ (as illustrated in Figure~\ref{fig:operations}), and let
    \begin{displaymath}
		\J = \{ S\subseteq V(H^\prime) \ \vert\ S \text{ is a dominating set of } H^\prime - \{x\} \text{ but not of } H^\prime \}.
    \end{displaymath}
	We can extend any element of $\J$ in several ways to attain dominating sets of $H$.
	For any $S \in \J$,
    \begin{displaymath}
		S^u, \,
		S^{u,v}, \,
		S^{u,x}, \,
		S^{v,x}, \, \text{ and } \,
		S^{u,v,x}
    \end{displaymath}
	are each dominating sets of $H$.
	Observe that $S \cup \{x\}$ dominates $H^\prime$, and thus $S^{u,x}$, $S^{v,x}$, and $S^{u,v,x}$ are already accounted for in the previously enumerated dominating sets.
	The only new (i.e.\ not already identified) dominating sets are those of the forms $S^u$ and $S^{u,v}$.
	We can use this fact to augment our previously established cycles by adding these new sets.

	Recall that $(G_j)_{1 \leq j \leq n}$ is a cycle of length $n$ in $\ve(H^\prime)$.
	Since $x$ is not an isolated vertex in $H^\prime$,
	there is at least one $G_j$ for which $G_j \neq S^x$ for any $S \in \J$.
	We can assume without loss of generality that $G_1$ is one such $G_j$.
	Now, for each $S \in \J$, recall that $S^x = G_j$ for some $j$ satisfying $2 \leq j \leq n$.
	If $j$ is even, note that the cycle $\Z_o^*$
	will contain the edge \begin{math}(G_j^u, G_j^{u,v}) = (S^{u,x}, S^{u,v,x})\end{math}.
	We can replace this edge with the path \begin{math}(S^{u,x}, S^u, S^{u,v}, S^{u,v,x})\end{math}.
	On the other hand, if $j$ is odd,
	$G_j^{u,v}$ will immediately precede $G_j^u$,
	and so the edge \begin{math}(G_j^{u,v}, G_j^u) = (S^{u,v,x}, S^{u,x})\end{math},
	can be similarly replaced with \begin{math}(S^{u,v,x}, S^{u,v}, S^u, S^{u,x})\end{math}.
	Either way, the length of the cycle is increased by two.

	We can once again apply this operation for each $S \in \J$
	to both $\Z_o^*$ and $\Z_e^*$ to obtain all odd cycles of lengths
	up to $3n + 2\abs{\J}$ and all even cycles of lengths up to $3n-1 + 2\abs{\J}$,
	unless $G_n = S^x$ for some $S \in \J$.
	In this case, note that $\Z_e^*$ does not contain the vertex $G_n^{u,v}$,
	and therefore does not contain the edge $(G_n^{u,v}, G_n^u)$.
	Thus we can only construct the even cycles of lengths up to $3n-3 + 2\abs{\J}$ in this way.
	For the cycle of length $3n-1 + 2\abs{\J}$,
	take instead the cycle of length $3n + 2\abs{\J}$ we just created from $\Z_o^*$,
	and replace the path \begin{math}(G_n^v = S^{v,x}, G_n^{u,v} = S^{u,v,x}, S^{u,v}, S^u, G_n^u = S^{u,x})\end{math}
	with \begin{math}(G_n^v = S^{v,x}, S^{u,v}, G_n^{u,v} = S^{u,v,x}, G_n^u = S^{u,x})\end{math}.
	This is a cycle of length $3n-1+2\abs{J}$, and so we have now found cycles of all lengths up to $3n+2\abs{\J}$.

	Indeed, there are no other dominating sets of $H$;
	only those extended from $\J$ or extended from dominating sets of $H^\prime$.
	Thus $\ve(H)$ is pancyclic.
\end{proof}

It can be noted that among all of the newly established cycles,
only at most three more TS edges were used than were present in the cycle $(G_j)_{1\leq j\leq n}$.
But the number of vertices of $\ve(H)$ is more than triple that of $\ve(H^\prime)$,
and so the proportion of TS edges used stays very small with respect to $n$.

We now prove a similar result for Operation B.


\begin{theorem}
	Let $H$ and $H^\prime$ be graphs such that $H^\prime$ is obtained from $H$ by Operation B.
	If $\ve(H^\prime)$ is pancyclic, then $\ve(H)$ is also pancyclic.
\end{theorem}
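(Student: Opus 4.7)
The plan is to mirror the approach of Theorem~\ref{thm:opA}, which simplifies substantially in the Operation~B setting. First, I characterize $\D(H)$: every dominating set $D$ of $H$ either contains $v$ or does not. If $v \notin D$, then $x \in D$ (to dominate $v$), so $D$ is a dominating set of $H'$ containing $x$; call this collection $\mathcal{B}$. If $v \in D$, write $D = S \cup \{v\}$; the subcase $x \notin S$ forces $u \in S$ (to dominate $u$), but then $u \sim x$ in $H'$ makes $u$ dominate $x$, so $S$ is a dominating set of $H'$ and $D$ lies in $\mathcal{A} := \{F \cup \{v\} : F \in \D(H')\}$. The same reasoning shows that the analog of $\J$ from Theorem~\ref{thm:opA} is empty here: any $S \subseteq V(H')$ dominating $V(H') \setminus \{x\}$ in $H' - \{x\}$ must contain $u$ (since $u$ is isolated in $H' - \{x\}$), and then $S$ necessarily dominates $x$ in $H'$. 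Therefore $\D(H) = \mathcal{A} \sqcup \mathcal{B}$, with $\abs{\mathcal{A}} = n := \abs{V(\ve(H'))}$ and $\abs{\mathcal{B}} = n_B$.

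Cycles of lengths $3$ through $n$ in $\ve(H)$ then arise by lifting the pancyclic cycles of $\ve(H')$ via $F \mapsto F^v$, since extension by $v$ preserves all TAR and TS adjacencies among subsets of $V(H')$; these lifted cycles lie entirely in $\mathcal{A}$.

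For cycles of length $n + k$ with $1 \leq k \leq n_B$, I would modify a suitable cycle of $\mathcal{A}$ (lifted from a cycle of $\ve(H')$) by inserting vertices of $\mathcal{B}$ via two kinds of detour. A \emph{single insertion} replaces an edge $(G_j^v, G_{j+1}^v)$ of the lifted cycle by the path $(G_j^v, G_j, G_{j+1}^v)$ whenever $G_{j+1} = G_j \setminus \{x\}$; it uses the TAR edge $G_j^v \sim G_j$ together with the TS edge $G_j \sim G_{j+1}^v$ obtained by sliding $x$ to $v$ along the edge $xv$ of $H$, and it adds one vertex. A \emph{double insertion} replaces $(G_j^v, G_{j+1}^v)$ by $(G_j^v, G_j, G_{j+1}, G_{j+1}^v)$ whenever both $G_j, G_{j+1} \in \mathcal{B}$; the middle edge $G_j \sim G_{j+1}$ carries over from $\ve(H')$ since it does not involve $v$, and this detour adds two vertices.

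The main obstacle will be verifying that every cycle length $n + k$ is attainable. Odd $k$ requires at least one single insertion, which is only possible at Hamilton-cycle edges corresponding to ``remove $x$'' TAR edges in $\ve(H')$, while any $F \in \mathcal{B}$ with $F \setminus \{x\} \notin \D(H')$ admits only double insertions. I would handle this by a careful case analysis, treating even $k$ via $k/2$ pairwise-disjoint double insertions in a lifted Hamilton cycle of $\ve(H')$, and handling odd $k$ by either including one single insertion (using the guaranteed ``remove $x$'' edge between $V(H')$ and $V(H') \setminus \{x\}$) or falling back to a lifted cycle of length $n - 1$ available from the pancyclicity of $\ve(H')$ together with $(k+1)/2$ double insertions. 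Analogous to the construction of $\Z_e$ and $\Z_o$ in Theorem~\ref{thm:opA} and their iterative extensions, this case analysis should confirm that every length from $n+1$ through $n+n_B$ is realized.
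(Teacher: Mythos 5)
Your decomposition of $\D(H)$ into $\mathcal{A}$ (the sets containing $v$, which are exactly the $v$-extensions of $\D(H^\prime)$) and $\mathcal{B}$ (the sets omitting $v$, which are exactly the dominating sets of $H^\prime$ containing $x$) is correct, and your single insertion makes a nice use of the token-sliding edge that slides $x$ to $v$. However, there is a genuine gap at the decisive step: you insert each $F \in \mathcal{B}$ only at its own position, i.e.\ adjacent to its lift $F^v$ on the lifted Hamilton cycle of $\ve(H^\prime)$, and neither of your detours is guaranteed to be available there. A double insertion at $F = G_j$ needs a cycle-neighbour $G_{j\pm1}$ that also lies in $\mathcal{B}$; a single insertion needs the boundary edge of the run to be the TAR edge $(G_j, G_j\setminus\{x\})$. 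But if $x \in G_j$ and $u \notin G_j$, then $G_j\setminus\{x\}$ fails to dominate $u$ and is not in $\D(H^\prime)$ at all, so such an $F$ never admits a single insertion; and even when $u \in G_j$, the Hamilton cycle may leave $\mathcal{B}$ at $G_j$ via a TS edge to $G_{j+1} = G_j\cup\{w\}\setminus\{x\}$ for some $w \in N_{H^\prime}(x)$ with $w \neq u$, in which case $G_j$ and $G_{j+1}^v$ differ in three elements and are not adjacent, so the single insertion again fails. Consequently a maximal run of $\mathcal{B}$-positions of odd length whose boundary edges are all of this bad type cannot be fully absorbed, and the Hamilton cycle of $\ve(H)$ (length $n+n_B$) is out of reach. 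Your two proposed remedies do not close this: the edge between $V(H^\prime)$ and $V(H^\prime)\setminus\{x\}$ exists in $\ve(H^\prime)$ but need not lie on the particular Hamilton cycle that pancyclicity hands you, and even if it did it supplies only one single insertion at one location, while the number of problematic odd runs is uncontrolled; likewise the $(n-1)$-cycle fallback still requires pairing all of $\mathcal{B}$ by consecutive cycle edges, which faces the same parity obstruction.

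The paper sidesteps exactly this difficulty by attaching the $v$-free sets ``vertically'' rather than at their own cycle positions: writing $\X$ for the lifts containing $x$ and $v$ but not $u$, each $X_j \in \X$ is joined by TAR edges to $X_j^\prime = X_j\setminus\{v\}$ and thence to $B_j^\prime = X_j^\prime\cup\{u\}$, so every element of your $\mathcal{B}$ is reachable in a pair $(X_j^\prime, B_j^\prime)$ hanging off a single vertex of the lifted cycle, independently of which TAR or TS edges the underlying Hamilton cycle happens to use. The residual issues (odd-length maximal subpaths in $\X$, and the one even length $n+2\abs{\X}-1$ when the $(n-1)$-cycle misses an element of $\X$) are then handled by special detours and a separate two-case argument. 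To rescue your scheme you would need an analogous device that decouples the insertion of $F \in \mathcal{B}$ from the local edge structure of the Hamilton cycle at $F^v$.
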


\begin{proof}
	We begin by observing that every dominating set of $H$
	will belong to exactly one of the following five sets:
	
	\begin{itemize}
		\item \begin{math}\X^\prime = \{S \in V(\ve(H)) \ \vert\ x \in S, u \not\in S, v \not\in S\}\end{math}
		\item \begin{math}\X = \{S \in V(\ve(H)) \ \vert\ x \in S, u \not\in S, v \in S\} = \{S^v \ \vert\ S \in \X^\prime\}\end{math}
		\item \begin{math}\B^\prime = \{S \in V(\ve(H)) \ \vert\ x \in S, u \in S, v \not\in S\} = \{S^u \ \vert\ S \in \X^\prime\}\end{math}
		\item \begin{math}\B = \{S \in V(\ve(H)) \ \vert\ x \in S, u \in S, v \in S\} = \{S^{u,v} \ \vert\ S \in \X^\prime\}\end{math}
		\item \begin{math}\U = \{S \in V(\ve(H)) \ \vert\ x \not\in S, u \in S, v \in S\}\end{math}.
	\end{itemize}
	Further, each of $\X^\prime$, $\X$, $\B^\prime$, and $\B$ have equal cardinalities,
	and there is a natural bijection between each
	by including or excluding the vertices $u$ and $v$ from each dominating set.
	Throughout this proof, if $X_j$ is some element of $\X$,
	then the symbols $X_j^\prime$, $B_j$, and $B_j^\prime$ will be used to denote
	the corresponding dominating sets in $\X^\prime$, $\B$, and $\B^\prime$,
	and likewise in the other directions.
	That is, \begin{math}X_j^\prime = X_j \setminus \{v\}\end{math},
    \begin{math}B_j = X_j \cup \{u\}\end{math},
    and \begin{math}B_j^\prime = X_j \cup\{u\} \setminus\{v\}\end{math}.

	As in the proof of the previous theorem, since $\ve(H^\prime)$ is pancyclic, there exist cycles in $\ve(H^\prime)$
	of every length from $3$ to $n = \abs{V(\ve(H^\prime))}$.
	We can extend each dominating set of these cycles by $v$ to obtain dominating sets of $\ve(H)$,
	and thus we have cycles in $\ve(H)$ of each length from $3$ to $n$ as well.
	In particular, let $\Z_e$ be a cycle of length $n-1$ in $\ve(H)$
	and $\Z_o$ a cycle of length $n$ in $\ve(H)$,
	each of which correspond to a cycle of the same length in $\ve(H^\prime)$.
	Observe that $\Z_o$ contains exactly the elements of $\U \cup \B \cup \X$,
    and so \begin{math}n = \abs{\U} + \abs{\B} + \abs{\X}\end{math}.

	Suppose \begin{math}\Q = (X_1, X_2, \ldots, X_\ell)\end{math} is a maximal subpath of $\Z_o$
	such that every vertex of $\Q$
	is an element of $\X$.
	For each pair of adjacent vertices $(X_j, X_{j+1})$ in $\Q$ where $j$ is odd,
	we can replace them in $\Z_o$ with the path \begin{math}(X_j, X_j^\prime, X_{j+1}^\prime, X_{j+1})\end{math}
	and extend the length of our cycle by two.
	We can further replace
	$(X_j^\prime, X_{j+1}^\prime)$ with \begin{math}(X_j^\prime, B_j^\prime, B_{j+1}^\prime, X_{j+1}^\prime)\end{math} in $\Z_o$
	to extend the length by another two vertices.
	Together, these two replacements are called a \emph{detour}.
	If $\ell$ is odd, we may also replace $X_\ell$ with \begin{math}(X_\ell, X_\ell^\prime, B_\ell^\prime)\end{math} in $\Z_o$
	to extend by another two vertices, as any element of $\U$ or $\B$ that is adjacent
	to $X_\ell$ will also be adjacent to \begin{math}B_\ell^\prime = X_\ell \cup \{u\} \setminus \{v\}\end{math}.
	We call such a replacement at an endpoint of a path a \emph{special detour}.

	As each replacement increments the length of the cycle by two,
	performing these replacements iteratively for all pairs of vertices in $\Q$
	(and also the last vertex of $\Q$, if $\ell$ is odd),
	for all maximal subpaths $\Q$ with the desired property
	will yield all cycles of odd length from $n$ to $n+2\abs{\X}$.
	But this is exactly the number of the dominating sets of $H$
	since \begin{math}\abs{V(\ve(H))} = \abs{\X} + \abs{\X^\prime} + \abs{\B} + \abs{\B^\prime} + \abs{\U}\end{math} and \begin{math}\abs{\X} = \abs{\X^\prime} = \abs{\B} = \abs{\B^\prime}\end{math}.
	So, this process has given us a cycle of every odd length greater than or equal to $3$.

	Replacements in even cycles follow a similar process---repeating these replacements for the vertices
	of the maximal subpaths of $\Z_e$ contained entirely in $\X$
	will also increase the length by two each time,
	obtaining all even cycles up to a maximum of $n-1$ plus twice the number of vertices
	$\Z_e$ shares with $\X$.
	If $\Z_e$ contains every element of $\X$,
	then this is simply $n-1 + 2\abs{\X}$,
	which is all possible even cycles, and thus $\ve(H)$ is pancyclic.

	On the other hand, suppose that $\Z_e$ does not contain every element of $\X$.
	Recall that $\Z_e$ contains only elements from $\U \cup \B \cup \X$, and, in particular, contains all but one of these elements, since \begin{math}\abs{\Z_e} = n-1\end{math}.
	So, suppose that $X_1$ is the unique element in $\X$ but not in $\Z_e$.
	In this case, replacements can only extend the length of $\Z_e$ up to \begin{math}n-1+2(\abs{\X}-1) = n+2\abs{\X}-3\end{math}. Thus, we are missing a single cycle of length $n+2\abs{\X}-1$ in $\ve(H)$. To complete the proof, we extend $\Z_e$ in a different way in order to obtain a cycle of length $n+2\abs{\X}-1$ in $\ve(H)$.

	Since $\Z_e$ is only missing a single element of $\U \cup \B \cup \X$---namely, $X_1$---it must be the case that $B_1$ is in $\Z_e$. Furthermore, since $X_1$ is the only neighbour of $B_1$ in $\X$, the elements immediately preceding and succeeding $B_1$ in the cycle $\Z_e$ are elements from $\U$ or $\B$. There are two cases. \\

	\textbf{Case 1}: Suppose that either the vertex immediately preceding $B_1$ in $\Z_e$ or the vertex immediately succeeding $B_1$ in $\Z_e$ is in $\B$.
	Denote this vertex $B_2$
	and assume without loss of generality that $B_1$ precedes $B_2$.
	Replacing the edge $(B_1, B_2)$
	with the path \begin{math}(B_1, B_1^\prime, X_1^\prime, X_2^\prime, B_2^\prime, B_2)\end{math}
	adds in the elements of $\B^\prime$ and $\X^\prime$ corresponding to
	$B_1$ and $B_2$,
	but does not include the corresponding elements from $\X$.
	Since \begin{math}\abs{\Z_e}=n-1\end{math} and $X_1$ is not in $\Z_e$, we know $X_2$ must appear in $\Z_e$.
	Now consider the maximal subpath $\Q^*$ of $\Z_e$ that contains only elements of $\X$,
	and in particular contains $X_2$.
	We cannot do quite the same replacements in $\Q^*$ as we did earlier in the proof,
	since $X_2^\prime$, $B_2^\prime$, and $B_2$ have already appeared in our extended cycle.
	Instead, we partition $\Q^*$ into three parts. Let $\Q_1$ and $\Q_2$ be two subpaths of $\Q^*$ such that \begin{math}\Q^* = (\Q_1, X_2, \Q_2)\end{math}.
	Now perform the replacements from earlier in the proof on $\Q_2$,
	and, if $\Q_1$ has even length, perform them on $\Q_1$ as well.
	If $\Q_1$ has odd length, we can still perform the same replacements,
	but the special detour must happen at the beginning, rather than at the end.
	For all other maximal subpaths $\Q$ contained in $\X$,
	we can do as before.
	This new cycle performs a replacement for every vertex of $\X$
	except $X_1$, $X_2$,
	and additionally includes each of the otherwise missing
	$X_1^\prime$, $B_1^\prime$, $X_2^\prime$, $B_2^\prime$. 
	So, the only missing vertex is $X_1$,
	as desired. \\

 \textbf{Case 2}:
	Otherwise, suppose that both the vertex immediately preceding $B_1$ in $\Z_e$ and the vertex immediately succeeding $B_1$ in $\Z_e$ are elements of $\U$.
	Note that there is only one TAR edge from $B_1$ to anything in $\U$,
	and it is to $B_1 \setminus \{x\}$.
	So, if the vertices on either side of $B_1$ are both in $\U$,
	then one of the adjacencies must be a TS edge.
	Let this be to a vertex $U_2$,
	and assume without loss of generality that $B_1$ precedes $U_2$.
	Since $B_1$ and $U_2$ are adjacent via a TS edge,
	then it must be the case that \begin{math}U_2 = B_1 \cup \{w\} \setminus \{x\}\end{math},
	for some vertex $w \in N(x)$ not equal to $u$ or $v$.
	Since $B_1 \cup \{w\} \setminus \{x\}$ is a dominating set,
	$B_1 \cup \{w\}$ is also a dominating set,
	which we will denote as $B_2$.
	Similarly we can define dominating sets $X_2$, $X_2^\prime$, and $B_2^\prime$
	to correspond with $B_2$,
	and note that these will each be adjacent to their counterparts
	$X_1$, $X_1^\prime$, and $B_1^\prime$ via a TAR edge, respectively.
	We replace the edge
	$(B_1, U_2)$ with the path
	\begin{math}(B_1, X_1, X_1^\prime, B_1^\prime, B_2^\prime, U_2)\end{math}
	to include four extra vertices.
	Then, performing the replacements from earlier in the proof
	for each maximal subpath $\Q$ of $\Z_e$ contained entirely in $\X$
	will yield a cycle of length $n + 2\abs{\X} + 1$,
	where the vertex $B_2^\prime$ occurs twice.
	However, the other occurrence of $B_2^\prime$
	will be as a replacement inside one such maximal subpath $\Q^*$.
	In particular, it will be either inside a detour
	\begin{math}(X_2, X_2^\prime, B_2^\prime, B_j^\prime, X_j^\prime, X_j)\end{math}
	for some $X_j \in \X$ adjacent to $X_2$,
	or inside a special detour \begin{math}(X_2, X_2^\prime, B_2^\prime, S)\end{math},
	where $S$ is a dominating set in $\U \cup \B$ adjacent to $X_2$.
	In the former case, $B_2^\prime$ can be eliminated
	by instead replacing $(X_2, X_j)$ with
	\begin{math}(X_2, X_2^\prime, X_j^\prime, X_j)\end{math},
	and in the latter case, the special detour can be entirely avoided,
	simply keeping the $(X_2, S)$ edge intact.
	In either case, we save two of the extra added vertices,
	bringing this cycle down to a length of $n + 2\abs{\X} - 1$,
	the desired length.

	Thus, $\ve(H)$ is pancyclic.
\end{proof}

While these theorems apply to many different graphs and classes of graphs, they were chosen specifically for their application to trees.
In particular, every nontrivial tree can be reduced to the single edge $P_2$ by performing a sequence of Operations A and B,
as is shown in~\cite{ABCHMSS22}.
We can see that $\ve(P_1) \cong P_1$ is trivially pancyclic,
and $\ve(P_2) \cong K_3$ is also pancyclic,
and thus we obtain the following corollary:

\begin{corollary}
	\label{cor:trees}
	For any finite tree $T$, $\ve(T)$ is pancyclic.
\end{corollary}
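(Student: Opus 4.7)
The plan is a straightforward induction on $|V(T)|$, invoking the two theorems just proved together with the reduction scheme from~\cite{ABCHMSS22} noted immediately above the statement. The hard work has already been done in Theorem~\ref{thm:opA} and its Operation~B counterpart; this corollary is a clean inductive wrap-up.

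For the base cases, when $T = P_1$ the TARS-graph $\ve(P_1)$ is a single vertex and is vacuously pancyclic (it has no cycles of length at least $3$). When $T = P_2$ with vertex set $\{a,b\}$, the dominating sets are $\{a\}$, $\{b\}$, and $\{a,b\}$, which are pairwise adjacent in $\ve(P_2)$ (one pair via TS, the others via TAR), so $\ve(P_2) \cong K_3$, which is trivially pancyclic.

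For the inductive step, suppose $T$ has $n \geq 3$ vertices and $\ve(T')$ is pancyclic for every tree $T'$ on fewer vertices. Take a longest path $v_0 v_1 \cdots v_k$ in $T$; by maximality of the path, every neighbor of $v_1$ other than $v_2$ must be a leaf. If $v_1$ has degree exactly $2$, then Operation A applies with $x = v_2$, $u = v_1$, $v = v_0$, producing a smaller tree $T'$. Otherwise $v_1$ has at least two leaf neighbors, and Operation B applies with $x = v_1$ and $u, v$ chosen to be any two such leaves, again producing a smaller tree $T'$. In both cases the inductive hypothesis guarantees that $\ve(T')$ is pancyclic, so Theorem~\ref{thm:opA} (in the Operation A case) or its Operation~B counterpart (in the Operation B case) yields that $\ve(T)$ is pancyclic.

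There is no genuine obstacle: the longest-path argument above establishes that a suitable reduction always exists on any tree with at least three vertices (this is exactly the tree-reduction fact cited from~\cite{ABCHMSS22}), and the vertex count strictly decreases at each reduction step, guaranteeing termination at $P_1$ or $P_2$. The induction therefore closes.
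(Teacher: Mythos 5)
Your argument is essentially the paper's: the paper simply cites~\cite{ABCHMSS22} for the fact that every nontrivial tree can be reduced to $P_2$ by a sequence of Operations A and B, checks the base cases $\ve(P_1) \cong P_1$ and $\ve(P_2) \cong K_3$, and concludes by induction via the two theorems. You supply the longest-path argument for why a reduction always exists, a detail the paper outsources to the citation, so the routes are the same in substance.

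One small hitch in your inductive step: when $T = P_3$, your recipe (``$v_1$ has degree exactly $2$, so apply Operation A'') produces $T' = P_1$, but Theorem~\ref{thm:opA} explicitly requires $H'$ to be a connected graph on at least $2$ vertices, so it cannot be invoked there. The fix is immediate: $P_3 \cong K_{1,2}$ also satisfies the hypotheses of Operation B (take $x = v_1$ and let $u, v$ be its two leaves), which reduces it to $P_2$; equivalently, arrange the reduction so that it always terminates at $P_2$ rather than $P_1$, as the paper does. For all trees on at least $4$ vertices your case split is sound (when the longest path has length at least $3$, Operation A leaves $x = v_2$ with its neighbour $v_3$, so $T'$ is a tree on at least two vertices; when the diameter is $2$ the tree is a star on at least four vertices and Operation B applies), so with that one-line adjustment the induction closes.
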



\section{Combining graphs}\label{sec:join}

There are many ways in which graphs can be combined. We provide a few of them here:

The \emph{union} of two graphs $G$ and $H$, denoted $G \cup H$, is the graph whose vertex and edge sets are the unions of the vertex and edge sets of the underlying graphs, respectively.
The \emph{Cartesian product} of two graphs $G$ and $H$, denoted $G\ \square\ H$, is the graph whose vertex set is the Cartesian product of the vertex sets of $G$ and $H$, where two vertices $(u,v)$ and $(x,y)$ are adjacent if either \begin{math}u = x\end{math} and \begin{math}v \sim y\end{math}, or if \begin{math}u \sim x\end{math} and \begin{math}v = y\end{math}.
The \emph{join} of two graphs $G$ and $H$, denoted $G \vee H$, is the union of $G$ and $H$, along with every possible edge between a vertex $h \in V(H)$ and a vertex $g \in V(G)$.

We denote by $d_G(x,y)$, the distance from $x$ to $y$ in the graph $G$. According to~\cite{SLTH09},
a bipartite graph $G$ is \emph{$k$-cycle bipanpositionable} if,
for any two distinct vertices $x$ and $y$,
and any integer $\ell$ which has the same parity as $d_G(x,y)$ and satisfies $d_G(x,y) \leq \ell \leq \frac{k}{2}$,
there exists a cycle $C$ of length $k$ in $G$ with $d_C(x,y)=\ell$.
Further, a bipartite graph $G$ is \emph{bipanpositionable bipancyclic} if $G$ is $k$-cycle bipanpositionable
for every even integer $k$ between $4$ and $\abs{V(G)}$.
\cite{SLTH09} prove that for any integer $n \geq 2$, the hypercube $Q_n$ is bipanpositionable bipancyclic, where the hypercube $Q_n$ is the graph on $2^n$ vertices obtained by taking the Cartesian product of $n$ copies of $P_2$.
This important fact will be of use several times throughout this section.

We begin by noting that if a graph has multiple components, a dominating set of the graph must dominate each component individually, which implies the following:

\begin{proposition}\label{prop1}
	Let $H_1$ and $H_2$ be graphs. If $H = H_1 \cup H_2$ is the union of $H_1$ and $H_2$, then \begin{math}\ve(H) \cong \ve(H_1)\ \square\ \ve(H_2)\end{math}.
\end{proposition}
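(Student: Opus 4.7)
The plan is to exhibit an explicit isomorphism by mapping each dominating set of $H$ to the pair of its intersections with the two components. First I would show that this gives a well-defined bijection on vertex sets; then I would verify that adjacency is preserved in both directions, using the observation that no edge of $H$ crosses between $V(H_1)$ and $V(H_2)$.

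More precisely, define $\phi: V(\ve(H)) \to V(\ve(H_1)) \times V(\ve(H_2))$ by
\[
	\phi(D) = (D \cap V(H_1),\ D \cap V(H_2)).
\]
The first step is to check that $D \subseteq V(H)$ dominates $H$ if and only if $D \cap V(H_i)$ dominates $H_i$ for $i = 1, 2$. This is immediate because every vertex of $H_i$ has all of its neighbours inside $V(H_i)$, so it can only be dominated by vertices in $D \cap V(H_i)$. This gives the bijection on vertex sets, since any pair $(D_1, D_2)$ of dominating sets of the components reassembles uniquely to a dominating set $D_1 \cup D_2$ of $H$.

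Next I would verify that $\phi$ preserves adjacency. Suppose $D \sim D'$ in $\ve(H)$. If this is a TAR edge, then $D$ and $D'$ differ by a single vertex $w \in V(H_i)$ for exactly one $i$; then $D \cap V(H_i) \sim D' \cap V(H_i)$ via a TAR edge in $\ve(H_i)$, while $D \cap V(H_{3-i}) = D' \cap V(H_{3-i})$, giving an edge in the Cartesian product. If it is a TS edge $D' = D \cup \{v\} \setminus \{u\}$ with $u \sim v$ in $H$, then $u$ and $v$ lie in the same component, say $V(H_i)$, so $\phi(D)$ and $\phi(D')$ differ by a TS edge in the $i$-th coordinate only. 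Conversely, any edge of $\ve(H_1) \square \ve(H_2)$ arises from a TAR or TS step in a single component, and such a step applied to one coordinate of $\phi(D)$ corresponds to the same step applied to $D$ in $H$, yielding an edge in $\ve(H)$.

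There is no serious obstacle here; the main thing to be careful about is the TS case, namely the observation that $u \sim v$ in $H$ forces $u$ and $v$ into a common component, which is exactly what prevents edges from mixing the two coordinates. Once this is in hand, $\phi$ is an edge-preserving bijection, hence the desired isomorphism.
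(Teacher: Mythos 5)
Your proof is correct and is simply a careful expansion of the paper's own (one-sentence) justification: a dominating set of a disjoint union restricts to dominating sets of the components, and every TAR or TS move acts within a single component, which is exactly the Cartesian product adjacency. No issues.
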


The following result of~\cite{Nishi} on the pancyclicity of Cartesian products will also be useful to note.

\begin{theorem}\label{thm:nishi}[Theorem 1.3 in~\cite{Nishi}] Let $G_1$ be a pancyclic graph with an odd number of vertices and let $G_2$ be a graph with a Hamilton path. Then $G_1\ \square\ G_2$ is pancyclic.
\end{theorem}

Let $H_1$ and $H_2$ be graphs such that $\varepsilon(H_1)$ and $\varepsilon(H_2)$ are both pancyclic.
\cite{BCS09} proved that every graph has an odd number of dominating sets; thus $\varepsilon(H_1)$ and $\varepsilon(H_2)$ both contain an odd number of vertices. Moreover, as $\varepsilon(H_1)$ and $\varepsilon(H_2)$ are both pancyclic, they both contain Hamilton paths. Hence the next corollary follows from Proposition~\ref{prop1} and Theorem~\ref{thm:nishi}.

\begin{corollary}\label{cor:union}
	Let $H$, $H_1$, and $H_2$ be graphs such that $H = H_1 \cup H_2$.
	If $\ve(H_1)$ and $\ve(H_2)$ are both pancyclic, then $\ve(H)$ is also pancyclic.
\end{corollary}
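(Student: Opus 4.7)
The plan is to apply the preceding proposition to write $\ve(H) \cong \ve(H_1)\ \square\ \ve(H_2)$, thereby reducing the claim to the purely graph-theoretic statement that the Cartesian product of two pancyclic graphs (each of odd order) is pancyclic. Set $G_i = \ve(H_i)$ with $n_i = \abs{V(G_i)}$, both odd by the Brouwer et al.\ result recalled in Section~\ref{sec:defn}; assume without loss of generality $n_1 \geq n_2$, and note the case $n_2 = 1$ is immediate since then $\ve(H) \cong G_1$. So assume $n_1, n_2 \geq 3$.

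Cycles of length $3 \leq \ell \leq n_1$ in $G_1\ \square\ G_2$ are obtained by embedding a cycle of length $\ell$ from $G_1$ (which exists by pancyclicity) into a single copy $V(G_1) \times \{y\}$ for any $y \in V(G_2)$. For each $\ell$ with $n_1 < \ell \leq n_1 n_2$, fix Hamilton cycles $x_1 x_2 \cdots x_{n_1} x_1$ of $G_1$ and $y_1 y_2 \cdots y_{n_2} y_1$ of $G_2$, and write $\ell = q n_1 + r$ with $q \geq 1$ and $0 \leq r \leq n_1 - 1$. For $r = 0$, a standard serpentine construction yields a Hamilton cycle of the cylinder $G_1\ \square\ G_2[\{y_1, \ldots, y_q\}]$, obtained by sweeping the Hamilton cycle of $G_1$ in alternating directions across consecutive columns and closing via the chord $x_1 x_{n_1}$. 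For $r \geq 3$, insert a detour of $r$ additional vertices through column $y_{q+1}$ by splicing in vertices drawn from a cycle of length $r$ in $G_1$ (obtainable by pancyclicity). For the residues $r \in \{1, 2\}$, use a cycle of length $n_1 - 1$ in $G_1$ in place of one full column, which frees a vertex position and permits the cycle to close with a one- or two-vertex contribution from column $y_{q+1}$ that reuses the vacated coordinate without repetition.

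The main obstacle is executing the detour construction uniformly across all residues $r$. The low-residue cases $r \in \{1, 2\}$ are the most delicate: a naive two-column splicing of only one or two new vertices collapses into a repeated vertex because the would-be detour's endpoints coincide. Pancyclicity of $G_1$, strictly stronger than Hamiltonicity, is the essential ingredient that supplies the $(n_1 - 1)$-cycle needed to shift a single vertex out of the main columns into the detour column, and more generally supplies the short cycles in $G_1$ whose arcs serve as detour paths in the $r \geq 3$ case. A careful case analysis is then required to verify that every length $\ell$ in $[n_1 + 1, n_1 n_2]$ is realized by exactly one of these constructions.
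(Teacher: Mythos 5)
Your opening reduction is exactly the paper's: the corollary is derived from the preceding proposition with no further argument, so the entire content is the claim that the Cartesian product of two pancyclic graphs of odd order is pancyclic, which is precisely what you set out to prove and where the difficulty lives.

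The genuine gap is in the detour step. To splice $r$ extra vertices from column $y_{q+1}$ into the serpentine cycle between two consecutive vertices $(x_i,y_q)$ and $(x_{i+1},y_q)$, the detour must have the form $(x_i,y_q),(v_1,y_{q+1}),\ldots,(v_r,y_{q+1}),(x_{i+1},y_q)$ with $v_1=x_i$ and $v_r=x_{i+1}$, because the first and last steps are forced to be ``vertical'' edges of the product (a vertex of column $y_{q+1}$ is adjacent to $(x_i,y_q)$ only if its first coordinate is $x_i$). Hence the interior of the detour is an $x_i$--$x_{i+1}$ path on exactly $r$ vertices in $G_1$, i.e.\ the edge $x_ix_{i+1}$ must lie on a cycle of length $r$ in $G_1$. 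Pancyclicity supplies a cycle of length $r$ \emph{somewhere} in $G_1$ but gives no control over which vertices it visits, let alone that it passes through $x_i$ and $x_{i+1}$ consecutively; ``splicing in vertices drawn from a cycle of length $r$'' is therefore not a valid operation. What you actually need is that some horizontal edge of the top column lies on cycles of every length (edge-pancyclicity at that edge), which is strictly stronger than pancyclicity. The $r\in\{1,2\}$ fix has the same defect in another guise: replacing ``one full column'' by an $(n_1-1)$-cycle destroys the spine and turnaround vertices through which the serpentine passes between columns, unless that cycle happens to contain them in a compatible order, which pancyclicity does not guarantee. A workable repair---and the one consistent with the technique the paper uses in its actual proofs (Theorems~\ref{thm:opA}, the Operation~B theorem, and Theorem~\ref{thm:joins})---is to accumulate length two at a time via detours $\left((u,y_q),(u,y_{q+1}),(u',y_{q+1}),(u',y_q)\right)$ performed along a matching of the horizontal edges of the top column, switching the base layer cycle between lengths $n_1$ and $n_1-1$ to control parity (this is where the odd order of $\ve(H_i)$ is genuinely needed), with separate care for the lengths just below $n_1n_2$ of the wrong parity. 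As written, the proposal does not close the argument, and the concluding ``a careful case analysis is then required'' defers exactly the part that can fail.
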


In essence, to determine whether a TARS-graph is pancyclic, it suffices to consider only the components of the seed graph individually, as that is enough information to determine whether the TARS-graph itself is pancyclic.
In fact, this allows us to relax the hypotheses of Theorem~\ref{thm:opA}---we no longer require connectedness of the seed graph:

\begin{corollary}
	Let $H$ and $H^\prime$ be graphs such that $H^\prime$ is obtained from $H$ by Operation A.
	If $\ve(H^\prime)$ is pancyclic, then $\ve(H)$ is also pancyclic.
\end{corollary}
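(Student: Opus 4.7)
The plan is to remove the connectedness assumption by pinpointing exactly where it was used in the proof of Theorem~\ref{thm:opA} and then handling the single remaining case via Corollary~\ref{cor:union}. A careful inspection of that proof reveals that connectedness of $H'$ is invoked in only one place, namely to justify the sentence ``Since $x$ is not an isolated vertex in $H^\prime$, there is at least one $G_j$ for which $G_j \neq S^x$ for any $S \in \J$.'' Every other step relies only on the local structure around $u$, $v$, and $x$ and on the cycles through $\ve(H')$ that the pancyclicity hypothesis provides. Consequently, whenever $x$ is not isolated in $H'$---even if $H'$ is disconnected---the original argument carries through without modification.

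So the task reduces to the case in which $x$ is an isolated vertex of $H'$. In that case I would write $H' = \{x\} \cup H''$, where $H'' = H' - x$, so that $H = P_3 \cup H''$, with $P_3$ the path on $v$, $u$, $x$ recovered by undoing Operation A at $x$. By the Proposition at the start of Section~\ref{sec:join}, $\ve(H') \cong \ve(\{x\})\ \square\ \ve(H'') \cong \ve(H'')$, so the pancyclicity of $\ve(H')$ transfers immediately to $\ve(H'')$.

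To conclude, I would verify directly that $\ve(P_3)$ is pancyclic: $P_3$ has exactly five dominating sets, and one can exhibit cycles of lengths $3$, $4$, and $5$ using a mixture of TAR edges together with the two TS edges arising from sliding the central vertex of $P_3$ to either of its leaves. With both $\ve(P_3)$ and $\ve(H'')$ pancyclic, Corollary~\ref{cor:union} applied to the decomposition $H = P_3 \cup H''$ yields that $\ve(H)$ is pancyclic. The main obstacle is really the bookkeeping step of confirming that connectedness plays no hidden role elsewhere in the proof of Theorem~\ref{thm:opA}; once this is checked, everything else is a short explicit computation in $\ve(P_3)$ combined with a single invocation of Corollary~\ref{cor:union}.
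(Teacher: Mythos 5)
Your proposal is correct and follows essentially the same route as the paper: the paper likewise observes that connectedness of $H'$ is needed only to prevent $x$ from being isolated, and in the isolated case uses $\ve(H') \cong \ve(H'-\{x\})$ together with $\ve(H) \cong \ve\left(\left(H'-\{x\}\right) \cup P_3\right)$, the pancyclicity of $\ve(P_3)$, and Corollary~\ref{cor:union}. Your explicit check of the five dominating sets of $P_3$ and the two token-sliding edges is a correct verification of the step the paper leaves to the reader.
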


The proof is exactly the same as the proof of Theorem~\ref{thm:opA}, with one exception.
In Theorem~\ref{thm:opA}, we require additionally that $H^\prime$ be a connected graph on at least $2$ vertices so that the operation does not leave the vertex $x$ isolated.
However, if $x$ \emph{were} isolated in $H^\prime$, note that
\begin{align*}
	\ve(H^\prime)
	&\cong \ve\left(\left(H^\prime - \{x\}\right) \cup \{x\}\right) \\
	&\cong \ve(H^\prime - \{x\}) \ \square\ \ve(K_1) \\
	&\cong \ve(H^\prime - \{x\}) \ \square\ K_1 \\
	&\cong \ve(H^\prime - \{x\})
\end{align*}
since \begin{math}\ve(K_1) \cong K_1\end{math}.
Further, \begin{math}\ve(H) \cong \ve( \left(H^\prime - \{x\}\right) \cup P_3) \cong \ve(H^\prime \cup P_3)\end{math}.
It can be verified that $\ve(P_3)$ is pancyclic, and thus so is $\ve(H)$.

\begin{corollary}
	If $F$ is a forest, then $\ve(F)$ is pancyclic.
\end{corollary}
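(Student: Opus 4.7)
The plan is to decompose $F$ into its connected components, each of which is necessarily a tree, and then inductively build up $\ve(F)$ from these trees using the union corollary. Concretely, I would write $F = T_1 \cup T_2 \cup \cdots \cup T_k$ where each $T_i$ is a tree, and then argue by induction on $k$.

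For the base case $k = 1$, $F$ is itself a tree, and the result is immediate from Corollary~\ref{cor:trees}. For the inductive step with $k \geq 2$, Corollary~\ref{cor:trees} gives that $\ve(T_k)$ is pancyclic, while the inductive hypothesis gives that $\ve(T_1 \cup \cdots \cup T_{k-1})$ is pancyclic. Writing $F = (T_1 \cup \cdots \cup T_{k-1}) \cup T_k$ and applying Corollary~\ref{cor:union} then completes the argument.

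There is no serious obstacle here, since Corollaries~\ref{cor:trees} and~\ref{cor:union} together do essentially all of the work; the corollary we are proving is almost a direct combination of the two. The only thing worth flagging---rather than a substantive difficulty---is the handling of tree components that are too small to host a cycle at all. For example, an isolated vertex gives $\ve(K_1) \cong K_1$, which is vacuously pancyclic since the condition $3 \leq \ell \leq n$ is empty when $n < 3$. Thus such components slot into the induction without issue, and the argument goes through for every forest.
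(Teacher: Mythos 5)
Your argument is correct and matches the paper's intended (implicit) proof: decompose the forest into tree components, apply Corollary~\ref{cor:trees} to each, and combine via Corollary~\ref{cor:union} by induction on the number of components. Your remark that single-vertex components are handled because $\ve(K_1) \cong K_1$ is vacuously pancyclic is consistent with the paper's own convention for $\ve(P_1)$.
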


We next consider the join operation.

\begin{lemma}
	\label{lemma:k1join}
	Let $G$ be any graph. If $\ve(G)$ is pancyclic, then $\ve(G \vee K_1)$ is also pancyclic.
\end{lemma}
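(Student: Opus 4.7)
The plan is to split the dominating sets of $G' := G \vee K_1$ (with $w$ the new universal vertex) into two groups: $V_1$, the dominating sets of $G$ (viewed as vertices of $\ve(G')$ that do not contain $w$), and $V_2 = \{S \cup \{w\} : S \subseteq V(G)\}$, the vertices of $\ve(G')$ that contain $w$. Write $S^+ := S \cup \{w\}$, $m := |V_1| = |V(\ve(G))|$, and $n := |V(G)|$, so $|V(\ve(G'))| = m + 2^n$. The induced subgraph of $\ve(G')$ on $V_1$ is isomorphic to $\ve(G)$, since adjacency between two sets avoiding $w$ depends only on $E(G)$. The induced subgraph on $V_2$ contains via its TAR edges alone a copy of the hypercube $Q_n$, which is bipanpositionable bipancyclic by \cite{SLTH09}. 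Finally, $V_1$ and $V_2$ are joined by TAR edges $S \sim S^+$ and TS edges $S \sim (S \setminus \{u\})^+$ for each $u \in S$.

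For every $L$ with $3 \le L \le m$, pancyclicity of $\ve(G)$ gives a cycle of length $L$ sitting entirely in the $V_1$-part of $\ve(G')$. For $L = m + t$ with $1 \le t \le 2^n$, I would fix a Hamilton cycle $C = (S_1, \ldots, S_m)$ of $\ve(G)$ (existing by pancyclicity) and modify it via a detour into $V_2$. For $t = 1$, a single $V_2$-vertex $X$ is inserted between a chosen edge $(S_j, S_{j+1})$ of $C$; case analysis on the type of the edge shows that $X = S_j^+$, $X = S_{j+1}^+$, or $X = (S_j \cap S_{j+1})^+$ works when the edge is TAR-add, TAR-remove, or TS, respectively.

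For $t \ge 2$, I would replace the edge $(S_j, S_{j+1})$ of $C$ with a path $S_j \to S_j^+ \to \cdots \to S_{j+1}^+ \to S_{j+1}$ whose middle $V_2$-segment from $S_j^+$ to $S_{j+1}^+$ contains $t$ vertices (length $t-1$). The bipanpositionable bipancyclic property of $Q_n$ yields, between any two vertices at $Q_n$-distance $d$, TAR-only paths of every length $\ell$ with $d \le \ell \le 2^n - d$ and $\ell \equiv d \pmod 2$. Consequently, a TAR edge of $C$ (so $d = 1$) realizes every even $t \in \{2, 4, \ldots, 2^n\}$; and a TS edge of $C$ (so $d = 2$) realizes $t = 2$ via the direct TS edge in $V_2$ and every odd $t \in \{3, 5, \ldots, 2^n - 1\}$.

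The main obstacle I anticipate is the corner case in which $C$ happens to consist entirely of edges of one type, since the clean analysis above then misses half the values of $t$ (the odd ones if $C$ is all TAR, the even ones if $C$ is all TS). To bridge this gap, I would insert exactly one TS edge within the $V_2$-segment of the detour: since TS edges in $V_2$ connect vertices at $Q_n$-distance two, inserting one flips the parity of available detour lengths without altering the endpoints, producing the missing values of $t$. This requires $G$ to contain at least one edge. In the remaining degenerate case where $G$ is edgeless, $\ve(G) \cong K_1$ and the claim reduces to verifying pancyclicity of $\ve(K_{1,n})$ directly, which can be done using the binary-reflected Gray code on $V_2$ connected to the unique $V_1$-vertex $V(G)$ through the appropriate cross-edge.
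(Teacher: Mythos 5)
Your proposal is correct and rests on the same two pillars as the paper's proof---the split of the dominating sets of $G \vee K_1$ into those of $G$ and the hypercube $Q_n$ of sets containing the universal vertex, plus the bipanpositionable bipancyclicity of $Q_n$ from \cite{SLTH09}---but you assemble them in the dual order. The paper builds all short cycles (lengths $3$ through $2^n+2$) on the hypercube side and then obtains every length $\ell + 2^n$ by splicing a fixed-length Hamilton path of $Q_n$ into each cycle of length $\ell$ of $\ve(G)$; you instead take lengths up to $\abs{V(\ve(G))}$ from the pancyclicity of $\ve(G)$ and graft variable-length hypercube detours into one Hamilton cycle $C$. Your version pays for this with the TAR-versus-TS parity bookkeeping, but gains by skipping the paper's separate short-cycle construction. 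One remark closes your only soft spot: the ``all edges of one type'' corner case cannot occur when $G$ has an edge, so the under-specified parity-flipping repair is unnecessary. Indeed, the TAR edges of $\ve(G)$ form a bipartite subgraph and $\abs{V(\ve(G))}$ is odd by \cite{BCS09}, so $C$ must contain a TS edge; and since $G$ has a non-isolated vertex, dominating sets of two different cardinalities exist, so $C$ must also contain a TAR edge. Your edgeless case matches the paper's, which simply invokes Corollary~\ref{cor:trees} for $K_{n,1}$.
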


\begin{proof}
	For a graph $G$, let \begin{math}n = \abs{V(G)}\end{math} and \begin{math}N = \abs{V(\ve(G))}\end{math}.
	If $G$ contains no edges, then \begin{math}G \vee K_1 \cong K_{n,1}\end{math}, and $\ve(K_{n,1})$ is pancyclic by Corollary~\ref{cor:trees}.
	Consequently, suppose $G$ contains at least one edge.
	Let $x$ denote the sole vertex of $K_1$, and define \begin{math}H = G \vee K_1\end{math}.
	Note that the dominating sets of $H$
	are exactly the dominating sets of $G$,
	along with $\{x\} \cup T$ for every subset $T$ of $V(G)$.
	Observe that the latter dominating sets form a hypercube $Q_n$
	under the TAR edges.

	Let $v$ be a fixed, non-isolated vertex in $G$, and let $S = V(G) \setminus \{v\}$.
	Reusing the notation from Section~\ref{sec:trees},
	let $S^v$ denote $S \cup \{v\}$, $S^x$ denote $S \cup \{x\}$, and $S^{v,x}$ denote $S \cup \{v,x\}$.
	Note that $S^{v,x}$ and $S^x$ are elements of $Q_n$,
	and that $S^v$, $S^{v,x}$, and $S^x$ form a triangle in $\ve(H)$.
	Since, by~\cite{SLTH09}, $Q_n$ is bipanpositionable bipancyclic 
	for every even length $\ell$ from $4$ to $2^n$,
	there is a cycle of length $\ell$ in $Q_n$
	which places $S^{v,x}$ and $S^x$ contiguously.
	So we have cycles in $\ve(H)$ of every even length between $4$ and $2^n$, inclusive.
	In each such cycle,
	we can replace the edge $(S^{v,x}, S^x)$ with the path \begin{math}(S^{v,x}, S^v, S^x)\end{math},
	and thus obtain a cycle of every odd length between $5$ and $2^n+1$, inclusive.
	Observe that replacing the edge $(S^{v,x}, S^x)$ in the cycle of length $2^n$
	with \begin{math}(S^{v,x}, S^v, S, S^x)\end{math} instead also yields a cycle of length $2^n+2$.

	As we have already pointed out the triangle in $\ve(H)$,
	we can see that there exist cycles of every length from $3$ to $2^n+2$.
	Then, for any cycle of length $\ell \geq 3$ in $\ve(G)$,
	consider some pair of adjacent vertices $A, B$ in the cycle.
	Observe that $A^x = A \cup \{x\}$ and $B^x = B \cup \{x\}$
	are adjacent dominating sets in $Q_n$,
	and are also adjacent to $A$ and $B$, respectively.
	Since, by \cite{SLTH09}, $Q_n$ is bipanpositionable bipancyclic,
	there exists a cycle through $Q_n$ of length $2^n$ which places $A^x$ and $B^x$ contiguously.
	In particular, there exists a path of length $2^n$ through $Q_n$ from $A^x$ to $B^x$.
	Inserting this path in between $A$ and $B$
	in the cycle of length $\ell$ adds $2^n$ vertices to the cycle,
	thus obtaining a new cycle of length $\ell + 2^n$ through $\ve(H)$
	for each integer $\ell$ such that $3 \leq \ell \leq N$.
	We have exhibited a cycle in $\ve(H)$ of every length
	up to $N+2^n$,
	and therefore $\ve(H)$ is pancyclic.
\end{proof}

Since $K_{n+1} = K_n \vee K_1$,
we obtain the following as an immediate corollary by induction:

\begin{corollary}
	\label{cor:complete}
	For any positive integer $n$, $\ve(K_n)$ is pancyclic.
\end{corollary}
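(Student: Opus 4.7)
The plan is to proceed by induction on $n$, using Lemma~\ref{lemma:k1join} as the engine and the identity $K_{n+1} \cong K_n \vee K_1$ to unfold the induction. Almost all of the work has already been done; the only real content is verifying a suitable base case so that the hypothesis of the lemma is available.

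For the base case I would take $n = 2$, since pancyclicity is only meaningful starting from cycles of length $3$. A direct computation shows that $K_2$ has exactly three dominating sets---namely $\{u\}$, $\{v\}$, and $\{u,v\}$---and that these form a triangle in $\ve(K_2)$: the pairs $(\{u\}, \{u,v\})$ and $(\{v\}, \{u,v\})$ are joined by TAR edges, and $(\{u\}, \{v\})$ is joined by a TS edge along the sole edge of $K_2$. Hence $\ve(K_2) \cong K_3$, which is trivially pancyclic. (Alternatively, one could start at $n = 1$ and observe $\ve(K_1) \cong K_1$, then apply Lemma~\ref{lemma:k1join} to pass to $n = 2$; but treating $n = 2$ directly avoids any delicacy about what ``pancyclic'' means for graphs on fewer than three vertices.)

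For the inductive step, assume $\ve(K_n)$ is pancyclic for some $n \geq 2$. Since $K_{n+1} \cong K_n \vee K_1$---any single vertex of $K_{n+1}$ is adjacent to all of the others, so deleting it leaves $K_n$ and the join recovers all edges---Lemma~\ref{lemma:k1join} applied with $G = K_n$ yields that $\ve(K_{n+1}) = \ve(K_n \vee K_1)$ is pancyclic. This completes the induction and hence the corollary.

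There is no real obstacle here: the only thing to be careful about is that Lemma~\ref{lemma:k1join} places no restriction on $G$ beyond the pancyclicity of $\ve(G)$, so the inductive hypothesis feeds into the lemma without adjustment, and the base case is immediate by inspection.
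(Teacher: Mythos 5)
Your proposal is correct and matches the paper's argument exactly: the paper derives this as an immediate corollary of Lemma~\ref{lemma:k1join} by induction via $K_{n+1} = K_n \vee K_1$. Your explicit verification of the base case ($\ve(K_2) \cong K_3$, consistent with the paper's earlier observation that $\ve(P_2) \cong K_3$) is a reasonable bit of added care that the paper leaves implicit.
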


Similarly, a threshold graph can be constructed from a single vertex by repeatedly adding an isolated vertex (i.e.\ taking the union with $K_1$) or a universal vertex (i.e.\ taking the join with $K_1$). The next result then follows from Corollary~\ref{cor:union} and Lemma~\ref{lemma:k1join} in a similar way:

\begin{corollary}
	If $G$ is a threshold graph, $\ve(G)$ is pancyclic.
\end{corollary}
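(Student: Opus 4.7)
The plan is to proceed by induction on the number of vertices $n$ of the threshold graph $G$, exploiting the recursive construction of threshold graphs: every threshold graph on $n \geq 2$ vertices can be written as either $G' \cup K_1$ or $G' \vee K_1$, where $G'$ is itself a threshold graph on $n-1$ vertices.

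For the base case, I would take $G = K_1$, so that $\ve(G) \cong K_1$. This is vacuously pancyclic since the definition only asks for cycles of length $\ell$ with $3 \leq \ell \leq \abs{V(\ve(G))}$, and here $\abs{V(\ve(G))} = 1$. Essentially the same vacuity handles $n = 2$ if one prefers to begin there.

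For the inductive step, I would assume the threshold graph $G'$ on $n-1$ vertices has a pancyclic TARS-graph and split into two cases. If $G = G' \vee K_1$, then Lemma~\ref{lemma:k1join} applies directly to give pancyclicity of $\ve(G)$. If instead $G = G' \cup K_1$, then by the proposition preceding Corollary~\ref{cor:union} we have
$\ve(G) \cong \ve(G')\ \square\ \ve(K_1) \cong \ve(G')\ \square\ K_1 \cong \ve(G')$,
which is pancyclic by the inductive hypothesis. Equivalently, one could invoke Corollary~\ref{cor:union} directly, using that $\ve(K_1)$ is vacuously pancyclic. In both cases $\ve(G)$ is pancyclic, completing the induction.

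The heavy lifting has already been done in Lemma~\ref{lemma:k1join} and the preceding union result, so there is no real obstacle; the proof is essentially a bookkeeping argument. The only point worth a moment's thought is ensuring the base case and the ``union with $K_1$'' step behave correctly when a factor is trivial, i.e. confirming that $\ve(K_1) \cong K_1$ qualifies as pancyclic under the stated definition so that the inductive machinery engages cleanly.
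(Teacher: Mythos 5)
Your proof is correct and follows essentially the same route as the paper: the paper likewise invokes the recursive construction of threshold graphs (repeatedly adding an isolated or universal vertex) and appeals to Corollary~\ref{cor:union} and Lemma~\ref{lemma:k1join} for the two cases. Your extra care about the trivial factor, namely that $\ve(K_1) \cong K_1$ is vacuously pancyclic and that the Cartesian product with $K_1$ is the identity, is a reasonable bookkeeping point that the paper handles the same way elsewhere.
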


We next generalize Lemma~\ref{lemma:k1join}.

\begin{theorem}
	\label{thm:joins}
	Let $G$ and $H$ be graphs. If $\ve(G)$ and $\ve(H)$ are pancyclic, then $\ve(G \vee H)$ is also pancyclic.
\end{theorem}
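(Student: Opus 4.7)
The plan is to generalise the approach of Lemma~\ref{lemma:k1join}, replacing the single $K_1$-hypercube used there by one built from a distinguished pair $(v,h)\in V(G)\times V(H)$. Let $n_G=\abs{V(G)}$, $n_H=\abs{V(H)}$, and $N=\abs{V(\ve(G\vee H))}$. The dominating sets of $G\vee H$ split into three classes: the dominating sets $\D_G$ of $G$ (those contained in $V(G)$), the dominating sets $\D_H$ of $H$, and the \emph{mixed} sets $\mathcal{M}$ that meet both $V(G)$ and $V(H)$. Because every join edge is present in $G\vee H$, any non-empty subset of $V(G\vee H)$ meeting both $V(G)$ and $V(H)$ is automatically a dominating set, so $\abs{\mathcal{M}}=(2^{n_G}-1)(2^{n_H}-1)$ and $N=\abs{\mathcal{M}}+\abs{V(\ve(G))}+\abs{V(\ve(H))}$. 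Moreover, the induced subgraphs of $\ve(G\vee H)$ on $\D_G$ and $\D_H$ are exactly copies of $\ve(G)$ and $\ve(H)$, which are pancyclic by hypothesis. The boundary cases $n_G=1$ or $n_H=1$ reduce directly to Lemma~\ref{lemma:k1join}, so I assume $n_G,n_H\geq 2$.

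Fix $v\in V(G)$ and $h\in V(H)$, and let $R=\{S\subseteq V(G\vee H):v,h\in S\}$; every $S\in R$ is a dominating set, so $\abs{R}=2^{n_G+n_H-2}$, and under the TAR rule $R$ is isomorphic to the hypercube $Q_{n_G+n_H-2}$. By the bipanpositionable bipancyclicity of hypercubes~\cite{SLTH09} (the same tool that powers Lemma~\ref{lemma:k1join}), $R$ contains cycles of every even length from $4$ to $2^{n_G+n_H-2}$. To produce odd lengths I would exploit TS-shortcuts across the join: for any $v'\in V(G)\setminus\{v\}$ and $h'\in V(H)\setminus\{h\}$, each $S\in R$ with $v'\in S$ and $h'\notin S$ is TS-adjacent to $S'=(S\setminus\{v'\})\cup\{h'\}\in R$ via the join edge $v'h'$, while their TAR-distance in $R$ is exactly $2$. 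Choosing an even bipanpositioning cycle of the desired length that places $S$ and $S'$ at cycle-distance $2$ and replacing the intervening length-$2$ path by the TS-edge $SS'$ decreases the cycle length by exactly one. Applied to each even $k\in\{4,6,\dots,2^{n_G+n_H-2}\}$ this yields cycles of every length from $3$ to $2^{n_G+n_H-2}$ inside $R$.

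Longer cycles must absorb the sets in $\mathcal{M}\setminus R$ (mixed sets missing $v$, $h$, or both) and ultimately the sets in $\D_G$ and $\D_H$. Any $S\in R$ with $\abs{S\cap V(G)}\geq 2$ is TAR-adjacent to $S\setminus\{v\}\in\mathcal{M}\setminus R$, and symmetrically for $h$; by pairing such detours---or, equivalently, by reapplying the hypercube argument to the parallel hypercubes $R'=\{S:v',h'\in S\}$ for other choices of $(v',h')$---any cycle through $R$ can be extended to absorb an arbitrary number of vertices of $\mathcal{M}\setminus R$ while its length grows in small increments. Finally, each $X\in\D_G$ with $v\in X$ is TAR-adjacent to $X\cup\{h\}\in R$, so a pancyclic cycle through $\ve(G)$ can be lifted into $\mathcal{M}$ by adjoining $h$ to each of its vertices and then spliced into the base cycle via detours of the form $(X\cup\{h\},X,X',X'\cup\{h\})$ that replace lifted TAR-edges; the full pancyclicity (and not merely hamiltonicity) of $\ve(G)$ provides control over the length of the spliced cycle, and the symmetric construction handles $\D_H$. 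Combining the three phases then exhibits a cycle of every length from $3$ up to $N$.

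The principal obstacle is the coordinated bookkeeping required across these three phases: at each target length one must choose which edges of the current cycle to replace by TS-shortcuts or by detours into $\mathcal{M}\setminus R$, $\D_G$, or $\D_H$, and verify that every newly introduced vertex is fresh. Parity control inside $R$ will be especially delicate because $Q_{n_G+n_H-2}$ is bipartite---every odd-length cycle genuinely requires at least one join-TS shortcut---and the bipanpositioning construction must be arranged so that a designated shortcut edge actually lies on the chosen cycle. Cycles whose target length falls near a boundary between $\abs{R}$, $\abs{\mathcal{M}}$, and $N$ may additionally require ad-hoc handling to avoid off-by-one mismatches, just as in the proofs of Theorem~\ref{thm:opA} and Lemma~\ref{lemma:k1join}.
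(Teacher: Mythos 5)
Your first phase is sound and close in spirit to the paper's: the hypercube $R$ of dominating sets containing a fixed pair $\{v,h\}$, the appeal to bipanpositionable bipancyclicity for all even lengths, and a single cross-join TS edge to break parity. (The paper gets odd lengths by \emph{inserting} a third vertex into a triangle with one TS edge rather than contracting a length-$2$ path onto a TS edge, but these are the same idea.) The difficulty is that phases two and three, which are where essentially all of the work in the actual proof lives, are not arguments but acknowledgements that an argument is needed. Concretely: (i) ``reapplying the hypercube argument to the parallel hypercubes $R'$'' produces new cycles \emph{inside} $R'$, not extensions of a cycle already passing through $R$, so it is not a mechanism for absorbing $\mathcal{M}\setminus R$; (ii) single detours $S\mapsto S\setminus\{v\}$ only reach mixed sets at TAR-distance $1$ from $R$, and the $(2^{n_G-1}-1)(2^{n_H-1}-1)$ mixed sets containing neither $v$ nor $h$ are never accounted for; (iii) nothing guarantees that \emph{every} intermediate length between $2^{n_G+n_H-2}$ and $\abs{\mathcal{M}}$ is realized, which is the whole content of pancyclicity. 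The paper resolves all of this with a completely explicit global coordinate system --- a binary-reflected Gray code $\XX$ on subsets of $V(G)$ crossed with one $\YY$ on subsets of $V(H)$ --- so that $\mathcal{M}$ becomes a grid swept out by ladder-style replacements that each add exactly two vertices, with two carefully built base cycles $\Z_e$ and $\Z_o$ of opposite parity.

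Your third phase has an additional unaddressed obstruction. To absorb $\D_G$ by detours of the form $(X\cup\{h\},X,X',X'\cup\{h\})$ you need to cover $\D_G$ by vertex-disjoint edges $(X,X')$ of $\ve(G)$, i.e.\ a perfect matching; but $\abs{\D_G}=\abs{V(\ve(G))}$ is odd by Brouwer et al., so one dominating set of $G$ is always left over and requires a genuinely different insertion. Moreover, a dominating set $X$ of $G$ with $v\notin X$ is TAR-adjacent to $X\cup\{h\}\in\mathcal{M}\setminus R$, not to $R$, so whether your splice is even available depends on choices made in the unspecified second phase. The paper confronts exactly this situation and gets around it with a TS edge: when $(x_{i\pm1},y_0)$ fails to be a dominating set, $(x_i,y_0)$ is still TS-adjacent to $(x_{i\pm1},y_1)$ because the Gray code step removed a vertex of $G$ that can be exchanged for $h$ across a join edge, giving an insertion that adds one vertex instead of two. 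Without a concrete replacement scheme of this kind, and a verification that the increments tile every target length, the proposal does not yet constitute a proof. Incidentally, the paper's construction never invokes the pancyclicity of $\ve(G)$ and $\ve(H)$ in its main argument (only in the reductions to $H=K_1$ and to complete $G$), whereas your plan leans on it to control splice lengths; that is a legitimately different route, but it is precisely the part you have not carried out.
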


We refer the reader to Figure~\ref{fig:joins} for a schematic diagram which illustrates the structure of the various cycles used in the following proof.

\begin{proof}
	Let $\abs{V(G)} = m$ and $\abs{V(H)} = n$ and assume without loss of generality that $m \geq n$. Given Lemma~\ref{lemma:k1join}, we assume $H \neq K_1$,
	and thus $n \geq 2$.
	In each of the cases where $m=n=2$,
	$G \vee H$ is isomorphic to either $K_4$, $P_3 \vee K_1$, or $C_4$,
	and in these cases $\ve(G \vee H)$ is pancyclic by Corollary~\ref{cor:complete},
	Lemma~\ref{lemma:k1join}, or by inspection, respectively.
	Therefore we can further assume $m \geq 3$.
	We can also assume that $G$ is not the complete graph $K_m$,
	as \begin{math}K_m = K_1 \vee K_1 \vee \cdots \vee K_1\end{math},
	and so \begin{math}\ve(G \vee H) \cong \ve(H \vee K_1 \vee \cdots \vee K_1)\end{math}
	will be pancyclic by induction.
	Thus there exist at least two vertices $g$ and $g^\prime$ in $G$ of degree at most $m-2$.
	Similarly, assume $h$ is a vertex of degree at most $n-2$ in $H$.

	Let \begin{math}\XX = (x_0, x_1, \ldots, x_{2^m-1})\end{math} be a binary-reflected Gray code
	through all subsets of the vertices of $G$
	such that \begin{math}x_1 = \{g\}\end{math}. Note that \begin{math}x_1=\{g\}\end{math} corresponds to the binary string with $0$ in each entry except for the entry corresponding to vertex $g$.
	Observe that by construction,
	$x_{2^m-2}$ will also contain $g$, as well as a single other vertex, which we pick to be $g^\prime$.
	Similarly, let \begin{math}\YY = (y_0, y_1, \ldots, y_{2^n-1})\end{math} be a binary-reflected Gray code
	through the subsets of the vertices of $H$,
	such that \begin{math}y_1 = \{h\}\end{math}.

	Observe that every subset of $V(G \vee H)$
	is the union of a subset of $V(G)$ and a subset of $V(H)$. We can thus express each subset of $V(G \vee H)$ as an ordered pair $(x_i, y_j)$,
	for some nonnegative integers $i$ and $j$ at most $2^m-1$ and $2^n-1$, respectively.

	Note that \begin{math}(x_1,y_1) = \{g,h\}\end{math} is a dominating set of $G \vee H$,
	and so $\{g,h\} \cup S$ is also a dominating set of $G \vee H$
	for any subset $S$ of $V(G \vee H) \setminus \{g,h\}$.
	In particular, the $2^{m+n-2}$ elements of
	\begin{math}\{\{g,h\} \cup S \ \vert\ S \subseteq V(G \vee H) \setminus \{g,h\}\}\end{math}
	form a hypercube $Q_{m+n-2}$ in $\ve(G \vee H)$ under only the TAR edges.
	Observe that the sets $V(G \vee H)$, $V(G \vee H) \setminus \{h\}$, and $V(G \vee H) \setminus \{g^\prime\}$
	form a triangle in $\ve(G \vee H)$,
	and $V(G \vee H)$ and $V(G \vee H) \setminus \{g^\prime\}$ are both elements of $Q_{m+n-2}$.
	Since, by~\cite{SLTH09}, hypercubes are bipanpositionable bipancyclic,
	there is a cycle of every even length in $Q_{m+n-2}$ from $4$ to $2^{m+n-2}$
	which places $V(G \vee H)$ and $V(G \vee H) \setminus \{g^\prime\}$ contiguously.
	As $V(G \vee H)$ and $V(G \vee H) \backslash \{g^\prime\}$ are both adjacent to $V(G \vee H) \setminus \{h\}$,
	we can insert $V(G \vee H) \backslash \{h\}$ between them in every even cycle to extend the length by one.
	Thus, $\ve(G \vee H)$ exhibits cycles of every length up to $2^{m+n-2}+1$.

	Since \begin{math}(x_1,y_1) \sim (x_{2^m-2},y_1)\end{math}, the path
	\begin{equation}
		\label{eq:joins_cycle}
		\left( (x_1,y_1), (x_2,y_1), (x_3,y_1), \ldots, (x_{2^m-2},y_1) \right)
	\end{equation}
	along the Gray code $\XX$ is a cycle of length $2^m-2$ in $\ve(G \vee H)$.
	Suppose that $t$ is the nonnegative integer such that \begin{math}x_t = V(G)\end{math}.
	Inserting the vertex $(x_t,y_0)$ either after $(x_t,y_1)$ if $t$ is even, or before $(x_t,y_1)$ if $t$ is odd
	will create a cycle of length $2^m-1$ in $\ve(G \vee H)$.
	The parity of $t$ is important so that we do not remove the edges necessary for the coming replacements.
	Then, for every edge $( (x_{2i+1}, y_j), (x_{2i+2}, y_j) )$
	with $0 \leq i \leq 2^{m-1} - 2$
	and $1 \leq j \leq 2^n - 2$ in either of these two cycles,
	replacing that edge with the path
	\begin{math}( (x_{2i+1}, y_j), (x_{2i+1}, y_{j+1}), (x_{2i+2}, y_{j+1}), (x_{2i+2}, y_j) )\end{math}
	increases the length of the cycle by two.
	These extensions are depicted in \textcolor{cyan}{cyan} in Figure~\ref{fig:joins}.
	Repeating this replacement for every possible edge in the original two cycles
	yields a cycle of every length up to \begin{math}(2^m-2)(2^n-1)+1 = 2^{m+n} - 2^m - 2^{n+1} + 3\end{math}.

	To construct the remaining cycles, we start with two new cycles:
	Let $\Z_e$ be the cycle of length $2^m-2$ defined in~(\ref{eq:joins_cycle}),
	but with the following replacements:
	\begin{enumerate}[label={(\alph*)}]
		\item replace the edge $((x_{2^m-3},y_1), (x_{2^m-2},y_1))$
			with the path from $(x_{2^m-3},y_1)$ to $(x_{2^m-3},y_{2^n-1})$ along the Gray code $\YY$,
			followed by $(x_{2^m-2},y_{2^n-1})$,
			and then the path from $(x_{2^m-1},y_{2^n-1})$ to $(x_{2^m-1},y_1)$ along the Gray code $\YY$ in reverse,
			followed by $(x_{2^m-2},y_1)$; then
		\item replace every edge $((x_{2^m-3}, y_{2j}), (x_{2^m-3}, y_{2j+1}))$
			for $j$ satisfying $1 \leq j \leq 2^{n-1}-2$,
			with the path \begin{math}((x_{2^m-3}, y_{2j}), (x_{2^m-2}, y_{2j}), (x_{2^m-2}, y_{2j+1}), (x_{2^m-3}, y_{2j+1}))\end{math}.
	\end{enumerate}

	Let $\Z_o$ be the same as $\Z_e$,
	except replace the edge $((x_{2^m-3},y_{2^n-1}), (x_{2^m-2},y_{2^n-1}))$
	with the path \begin{math}((x_{2^m-3},y_{2^n-1}), (x_{2^m-2},y_{2^n-2}), (x_{2^m-2},y_{2^n-1}))\end{math}.
	The edges in common between $\Z_e$ and $\Z_o$ are depicted in \textcolor{black}{black} in Figure~\ref{fig:joins},
	with the differences depicted in \textcolor{orange}{orange} and \textcolor{teal}{teal}, respectively.

	For each of these two cycles,
	sequentially repeating the same edge replacement procedure as mentioned before
	(i.e., replacing each edge $( (x_{2i+1}, y_j), (x_{2i+2}, y_j) )$
	with the path
    \begin{displaymath}( (x_{2i+1}, y_j), (x_{2i+1}, y_{j+1}), (x_{2i+2}, y_{j+1}), (x_{2i+2}, y_j) )\end{displaymath}
	for each integer $i$ and $j$ satisfying
	$0 \leq i \leq 2^{m-1} - 3$ and $1 \leq j \leq 2^n - 2$)
	will increase the lengths of these cycles by two each time,
	yielding cycles of every length up to \begin{math}(2^m-1)(2^n-1) = 2^{m+n} - 2^m - 2^n\end{math}.
	This is again depicted in \textcolor{cyan}{cyan} in Figure~\ref{fig:joins}.
	In fact, $\Z_o$ with these augmentations
	will contain every dominating set $(x_i,y_j)$ for positive $i,j$.
	That is, this cycle loops through every dominating set which has nonempty intersection with both $G$ and $H$.

	Observe that every dominating set of $G \vee H$ which does not intersect $H$
	can be written as $(x_i,y_0)$ for some positive integer $i \leq 2^m-1$.
	Further note that $1 < i < 2^m-1$, since neither $g$ nor $g^\prime$ are universal vertices.
	Then each dominating set $(x_i,y_0)$ will be adjacent to $(x_i,y_1)$,
	and exactly one of the edges \begin{math}\{((x_{i-1},y_1),(x_i,y_1)), ((x_i,y_1),(x_{i+1},y_1))\}\end{math} will be present within the augmented $\Z_o$.
	Denote the vertices incident with this edge as $(x_i,y_1)$ and $(x_{i \pm 1},y_1)$,
	accordingly.
	If $(x_{i \pm 1}, y_0)$ is also a dominating set,
	then this edge can be replaced with the path
	\begin{math}((x_i,y_1), (x_i,y_0), (x_{i \pm 1},y_0), (x_{i \pm 1},y_1))\end{math} to include both dominating sets.
	On the other hand, if $(x_{i \pm 1}, y_0)$ is not a dominating set,
	then it must be the case that \begin{math}x_{i \pm 1} = x_i \setminus \{v\}\end{math} for some vertex $v \in G$,
	since the binary-reflected Gray code is formed via only TAR edges,
	and supersets of dominating sets are still dominating.
	So then \begin{math}(x_i,y_0) \sim (x_{i \pm 1},y_1)\end{math} via a TS edge,
	since this exchanges $h \in H$ for $v \in G$.
	Thus, we can replace the edge with the path \begin{math}((x_i,y_1), (x_i,y_0), (x_{i \pm 1},y_1))\end{math}.
	These replacements are depicted in \textcolor{magenta}{magenta} in Figure~\ref{fig:joins}.

	The dominating sets that do not intersect $G$ can also be included sequentially
	in a similar manner, with the exception that $(x_1,y_j)$ will be adjacent to both
	$(x_1,y_{j-1})$ and $(x_1,y_{j+1})$ for integers $j$ between $2$ and $2^n-2$, inclusive.
	If $j$ is even, use the edge $((x_1,y_j),(x_1,y_{j+1}))$,
	and if $j$ is odd, use the edge $((x_1,y_j),(x_1,y_{j-1}))$.
	These are also depicted in \textcolor{magenta}{magenta} in Figure~\ref{fig:joins}.

	So, by including each extra dominating set sequentially,
	we can obtain a cycle of any length through $\ve(G \vee H)$,
	and thus $\ve(G \vee H)$ is pancyclic.
\end{proof}

\begin{figure}[hbtp]
	\centering
	\begin{tikzpicture}[scale=0.8]
		\draw[very thick] (-0.5,-0.5) rectangle ++(16,8);
		\draw[very thick] (0.5,-0.5) -- ++(0,8);
		\draw[very thick] (-0.5,0.5) -- ++(16,0);

		\draw[red, pattern=north east lines, pattern color=red] (0.5,-0.5) rectangle ++(1,1);
		\draw[red, pattern=north east lines, pattern color=red] (-0.5,0.5) rectangle ++(1,1);
		\draw[red, pattern=north east lines, pattern color=red] (14.5,-0.5) rectangle ++(1,1);

		\draw[dashed, semithick] (1,1) -- ++(1,0); 
		\draw[dashed, semithick] (3,1) -- ++(1,0);
		\draw[dashed, semithick] (5,1) -- ++(1,0);
		\draw[dashed, semithick] (7,1) -- ++(1,0);
		\draw[dashed, semithick] (9,1) -- ++(1,0);
		\draw[dashed, semithick] (11,1) -- ++(1,0);

		\draw[semithick] (2,1) -- ++(1,0); 
		\draw[semithick] (4,1) -- ++(1,0);
		\draw[semithick] (6,1) -- ++(1,0);
		\draw[semithick] (8,1) -- ++(1,0);
		\draw[semithick] (10,1) -- ++(1,0);
		\draw[semithick] (12,1) -- ++(1,0);

		\draw[semithick] (13,1) -- ++(0,1) -- ++(1,0) -- ++(0,1) -- ++(-1,0) -- ++(0,1) -- ++(1,0) -- ++(0,1) -- ++(-1,0) -- ++(0,2); 
		\draw[semithick] (14,7) -- ++(1,0) -- ++(0,-6) -- ++(-1,0); 

		\draw[orange, dashed, semithick] (13,7) -- ++(1,0); 
		\draw[teal, semithick] (13,7) -- ++(1,-1) -- ++(0,1); 

		\draw[cyan, semithick] (1,1) -- ++(0,6) -- ++(1,0) -- ++(0,-6); 
		\draw[cyan, semithick] (3,1) -- ++(0,2) -- ++(1,0) -- ++(0,-2);
		\draw[cyan, semithick] (5,1) -- ++(0,1) -- ++(1,0) -- ++(0,-1);
		\draw[cyan, semithick] (7,1) -- ++(0,3) -- ++(1,0) -- ++(0,-3);
		\draw[cyan, semithick] (9,1) -- ++(0,5) -- ++(1,0) -- ++(0,-5);
		\draw[cyan, semithick] (11,1) -- ++(0,2) -- ++(1,0) -- ++(0,-2);

		\draw[magenta, semithick] (4,1) -- ++(1,-1) -- ++(0,1); 
		\draw[magenta, semithick] (8,1) -- ++(0,-1) -- ++(1,0) -- ++(0,1);
		\draw[magenta, semithick] (10,1) -- ++(0,-1) -- ++(1,1);

		\draw[magenta, semithick] (1,2) -- ++(-1,0) -- ++(1,1); 
		\draw[magenta, semithick] (1,4) -- ++(-1,0) -- ++(0,1) -- ++(1,0);
		\draw[magenta, semithick] (1,6) -- ++(-1,1) -- ++(1,0);

		\node at (0,0) {$0$};
		\node[below right,teal] at (14,6) {$\Z_o$};
		\node[left,orange] at (13,7) {$\Z_e$};

		\path (-0.5,-1.5) -- ++(16,0) node[midway,below] {$\XX$};
		\foreach \x in {0,...,15}
		\path (\x-0.5,-0.5) -- ++(1,0) node[midway,below] {$x_{\x}$};

		\path (-1.5,-0.5) -- ++(0,8) node[midway,left] {$\YY$};
		\foreach \y in {0,...,7}
		\path (-0.5,\y-0.5) -- ++(0,1) node[midway,left] {$y_{\y}$};

		\foreach \x in {1,...,15}
		\foreach \y in {1,...,7}
		\fill (\x,\y) circle (1pt);

		\foreach \x in {2,...,14}
		\fill (\x,0) circle (1pt);

		\foreach \y in {2,...,7}
		\fill (0,\y) circle (1pt);

		\fill (1,1) circle (3pt);
		\fill (14,1) circle (3pt);
	\end{tikzpicture}
	\caption{A schematic diagram of the cycles used in the proof of Theorem~\ref{thm:joins}.}
	\label{fig:joins}
\end{figure}

Note that among all of of the cycles just constructed,
at most $2^{m-1}+2^{n-1}+1$ TS edges were used at a time.
But the number of TAR edges is on the order of $(2^m-1)(2^n-1)$,
and hence the proportion of TS edges is very small in comparison to the total number of edges in these cycles.

This theorem yields a large number of graphs which admit pancyclic TARS-graphs,
including all complete multipartite graphs and complete split graphs.
In particular, this includes the TARS-graph of $K_{2,2} \cong C_4$ is pancyclic.
This is in stark contrast to the results of~\cite{ABCHMSS22},
who showed that dominating graphs of cycles of length $4n$ never admit Hamilton paths,
let alone Hamilton cycles or pancyclicity.
We have also computationally verified that $\ve(C_8)$ has a Hamilton cycle,
and that $\ve(G)$ is pancyclic for all graphs $G$ of order at most $5$.


\section{Open Questions}\label{sec:questions}

We have shown that TARS-graphs are pancyclic for many different types of seed graph, but we have not shown any results in the negative. Indeed, we have not found any graphs whose TARS-graphs are not pancyclic. This leaves an important question unanswered: Are all TARS-graphs pancylic? Or, if not pancyclic, hamiltonian?
Clearly the inclusion of the TS edges to the dominating graph is a powerful addition, since it is enough to induce pancyclicity in graphs which were otherwise bipartite and non-hamiltonian;~\cite{ABCHMSS21}. However, is there a smaller addition which could have induced such a property? In the proofs in Sections~\ref{sec:trees} and \ref{sec:join}, we only ever make use of a very small number of TS edges at a time. In some sense, the dominating graph $\D(G)$ was already very ``close'' to being hamiltonian. It is interesting to consider how many TS edges are necessary to include in order to obtain pancyclicity or hamiltonicity in the reconfiguration graph. Or, on the other hand, how many could be included without inducing pancyclicity/hamiltonicity?
In~\cite{ABCHMSS21,ABCHMSS22}, the authors present Hamilton paths in $\D(P_n)$ whose endpoints are never TS-adjacent.
However, we have computationally verified that for every graph $G$ of order at most $8$,
there does exist a Hamilton cycle of $\ve(G)$ which uses only a single TS edge
whenever $\D(G)$ has a Hamilton path.
Is this true in general? Do the hamiltonicity and pancyclicity of TARS-graphs really only rely on a single extra TS edge?

\acknowledgements
The authors thank Dr.\ L.\ Teshima for her valuable comments on L. Pipes' honours thesis, from which this work was inspired.  The authors also thank Dr.\ A.\ Burgess for a helpful question which led to the investigation of pancylicity in TARS-graphs rather than only hamiltonicity.  Additionally the authors thank the anonymous referee for their comments and suggestions.

\nocite{*}
\bibliographystyle{abbrvnat}
\bibliography{15637-dmtcs}

@incollection{ABCHMSS21,
author = {Adaricheva, Kira and Bozeman, Chassidy and Clarke, Nancy and Haas, Ruth and Messinger, Margaret-Ellen and Seyffarth, Karen and Smith, Heather},
editor={D. Ferrero and L. Hogben and S.R. Kingan and G.L. Matthews},
year = {2021},
pages = {119-135},
booktitle={Research Trends in Graph Theory and Applications},
title={Reconfiguration Graphs for Dominating Sets},
publisher={Springer International Publishing},
}

@article{ABCHMSS22,
author = {Adaricheva, Kira and Blake, Heather and Bozeman, Chassidy and Clarke, Nancy and Haas, Ruth and Messinger, Margaret-Ellen and Seyffarth, Karen},
journal={Graphs and Combinatorics},
year = {2022},
volume={38},
number={6},
pages = {},
title = {Hamilton Paths in Dominating Graphs of Trees and Cycles},
}

@misc{BCS09,
title={The number of dominating sets of a finite graph is odd},
author={A. Brouwer and P. Csorba and L. Schrijver},
journal={preprint},
year={2009},
note={\url{https://www.win.tue.nl/~aeb/preprints/domin4a.pdf} [Accessed: 2025]} 
}

@article{HS14,
  title={The k-Dominating Graph},
  author={Ruth Haas and Karen Seyffarth},
  journal={Graphs and Combinatorics},
  year={2014},
  volume={30},
  pages={609-617},
}

@article{Nishimura18,
  title={Introduction to Reconfiguration},
  author={Naomi Nishimura},
  journal={Algorithms},
  year={2018},
  volume={11},
  pages={52},
}

@incollection{MN20,
  booktitle={50 years of combinatorics, graph theory, and computing},
  title={Reconfiguration of Colourings and Dominating Sets in Graphs: a Survey},
  author={Christina M. Mynhardt and Shahla Nasserasr},
  editor={F. Chung and R. Graham and F. Hoffman and L. Hogben and R.C. Mullin and D.B. West},
  year={2020},
  publisher={CRC Press},
  pages={171-187}
}

@article{SLTH09,
  title={The bipanpositionable bipancyclic property of the hypercube},
  author={Shih, Yuan-Kang and Lin, Cheng-Kuan and Tan, Jimmy J. M. and Hsu, Lih-Hsing},
  journal={Computers \& Mathematics with Applications},
  year={2009},
  volume={58},
  pages={1722-1724},
}

@article{FHHH11,
  title={$\gamma$-graphs of graphs},
  author={Gerd H. Fricke and Sandra M. Hedetniemi and Stephen T. Hedetniemi},
  journal={Discussiones Mathematicae Graph Theory},
  year={2011},
  volume={31},
  pages={517–531},
}

@article{CHHH20,
  title={A note on $\gamma$-graphs},
  author={Elizabeth Connelly and Kevin R. Hutson and Stephen T. Hedetniemi and T.W. Haynes},
  journal={AKCE International Journal of Graphs and Combinatorics},
  year={2011},
  volume={8},
  number={1},
  pages={23-31},
}

@article{Mutze23,
  title={Combinatorial Gray codes---an updated survey},
  author={Torsten M{\"u}tze},
  journal={The Electronic Journal of Combinatorics},
  year={2023},
  volume={Dynamic Surveys},
  pages={DS26: Sep 20, 2024}, 
}

@book{BM08,
  title={Graph Theory},
  author={Bondy, John Adrian and Murty, U.S.R.},
  isbn={9781846289699},
  series={Graduate Texts in Mathematics},
  volume={244},
  year={2008},
  publisher={Springer London},
}

@unpublished{Beaton,
  author = {Iain Beaton},
  title = {Reconfiguration graphs for minimal domination sets},
  year = {2024},
  note = {Preprint},
  url = {https://arxiv.org/pdf/2411.02300}
}

@unpublished{Nishi,
  author = {Yusaku Nishimura},
  title = {A new approach to pancyclicity of paley graphs I},
  year = {2023},
  note = {Preprint},
  url = {https://arxiv.org/pdf/2308.04759}
}

@mastersthesis{Pipes24,
  author = {Pipes, Logan},
  title = {A mixed model for domination reconfiguration},
  type={Bachelor's Thesis},
  year = {2024},
  school = {Mount Allison University},
  address={Sackville, New Brunswick, Canada}
 }
\label{sec:biblio}

\end{document}